\numberwithin{equation}{section}
\def\rr{{\mathbb R}}
\def\rn{{{\rr}^n}}
\def\zz{{\mathbb Z}}
\def\nn{{\mathbb N}}
\def\cc{{\mathbb C}}
\def\cx{{\mathcal X}}
\def\cm{{\mathcal M}}
\def\cb{{\mathcal B}}
\def\fz{\infty}
\def\az{\alpha}
\def\lz{\lambda}
\def\dz{\delta}
\def\ez{\epsilon}
\def\kz{\kappa}
\def\bz{\beta}
\def\fai{\varphi}
\def\gz{{\gamma}}
\def\oz{{\omega}}
\def\wz{\widetilde}
\def\hs{\hspace{0.3cm}}
\def\ls{\lesssim}
\def\rbmo{{\mathop\mathrm{RBMO}}}
\def\rblo{{\mathop\mathrm{RBLO}}}
\def\supp{{\mathop\mathrm{\,supp\,}}}
\def\loc{{\mathop\mathrm{\,loc\,}}}
\def\einf{{\mathop{\mathrm{\,ess\,inf\,}}}}
\def\diam{{\mathop\mathrm{\,diam\,}}}
\def\dint{\displaystyle\int}
\def\dfrac{\displaystyle\frac}
\def\r{\right}
\def\lf{\left}
\def\gfz{\genfrac{}{}{0pt}{}}
\begin{document}

%Basic Information
\Year{2012} %
\Month{January}
\Vol{55} %
\No{1} %
\BeginPage{1} %
\EndPage{XX} %
\AuthorMark{Lin H {\it et al.}}
\ReceivedDay{July 27, 2012}
\AcceptedDay{April 03, 2013}
\DOI{10.1007/s11425-000-0000-0} % The author doesn't need fill in it.

% \title[short text for running head]{full title}{comments for title}
\title{Equivalent boundedness of Marcinkiewicz integrals
on non-homogeneous metric measure spaces}{}

% \author[]{Full name}{footnote}
% Remark:  One \author for one author

\author{LIN Haibo$^1$}{}
\author{YANG Dachun$^{2,}$}{Corresponding author}
%\author[4]{FIRST3 Last Name3}{}

%
\address[{\rm1}]{College of Science, China Agricultural University, Beijing {\rm 100083}, People's Republic of China;}
\address[{\rm2}]{School of Mathematical Sciences, Beijing Normal University,
Laboratory of Mathematics\\
and Complex Systems, Ministry of
Education, Beijing {\rm 100875}, People's Republic of China;}
%\address[{\rm3}]{Department of Mathematics, University3, City3 {\rm100003}, Country3;}
%\address[{\rm4}]{College of Science, University4, City4 {\rm100004}, Country4}
\Emails{haibolincau@126.com, dcyang@bnu.edu.cn}\maketitle

%     Abstract is required.

 {\begin{center}
\parbox{14.5cm}{\begin{abstract}
 %Abstract is required.
 Let $({\mathcal X},\,d,\,\mu)$ be a metric measure
space satisfying the upper doubling condition and the geometrically
doubling condition in the sense of T. Hyt\"onen. In this paper, the
authors prove that the $L^p(\mu)$ boundedness with $p\in(1,\,\infty)$
of the Marcinkiewicz integral is equivalent to either of its
boundedness from $L^1(\mu)$ into $L^{1,\infty}(\mu)$ or from the atomic
Hardy space $H^1(\mu)$ into $L^1(\mu)$. Moreover, the authors show that,
if the Marcinkiewicz integral is bounded from
$H^1(\mu)$ into $L^1(\mu)$, then it is also bounded from $L^\infty(\mu)$ into
the space ${\mathop\mathrm{RBLO}}(\mu)$ (the regularized {\rm BLO}),
which is a proper subset of
${\rm RBMO}(\mu)$ (the regularized {\rm BMO}) and, conversely, if
the Marcinkiewicz integral is bounded from $L_b^\infty(\mu)$ (the
set of all $L^\infty(\mu)$ functions with bounded support) into the space ${\rm
RBMO}(\mu)$, then it is also bounded from the finite atomic
Hardy space $H_{\rm fin}^{1,\,\infty}(\mu)$ into $L^1(\mu)$.
These results essentially
improve the known results even for non-doubling measures.
 \vspace{-3mm}
\end{abstract}}\end{center}}

%  Keyword is required.
 \keywords{upper doubling, geometrically
doubling, Marcinkiewicz integral, atomic Hardy space, ${\mathop\mathrm{RBMO}}(\mu)$}

%  \subjclass is required.
 \MSC{42B20, 42B25, 42B35, 30L99}
%Please look up your MSC number in the attached file msc2010final2-Oct09.pdf

%%%%%%%%%%%%%%%%%%%%%%%%%%%%%%%%%%%%%%%%%%%%%%%%%%%%%%%%%%%%
\renewcommand{\baselinestretch}{1.2}
\begin{center} \renewcommand{\arraystretch}{1.5}
{\begin{tabular}{lp{0.8\textwidth}} \hline \scriptsize
{\bf Citation:}\!\!\!\!&\scriptsize Lin H, Yang D. Equivalent boundedness of
Marcinkiewicz integrals on non-homogeneous metric measure spaces. Sci China Math, 2012, 55, doi: 10.1007/s11425-000-0000-0\vspace{1mm}
\\
\hline
\end{tabular}}\end{center}

%%%%%%%%%%%%%%%%%%%%%%%%%%%%%%%%%%%%%%%%%%%%%%%%%%%%%%%%%%%%
%% Text of article.
%%%%%%%%%%%%%%%%%%%%%%%%%%%%%%%%%%%%%%%%%%%%%%%%%%%%%%%%%%%%
%    Section headings
\baselineskip 11pt\parindent=10.8pt  \wuhao
\section{Introduction}

It is well known that the Littlewood-Paley
$g$-function is a very important tool in harmonic analysis and the
Marcinkiewicz integral is essentially a Littlewood-Paley
$g$-function. In 1938, as an analogy of the classical Littlewood-Paley
$g$-function without going into the interior of the unit disk, Marcinkiewicz \cite{m38} introduced the
integral on one-dimensional Euclidean space $\rr$, which is today called
the Marcinkiewicz integral, and conjectured that it is bounded
on $L^p([0,\,2\pi])$ for any $p\in(1,\,\fz)$. In 1944, by using a complex
variable method, Zygmund \cite{z44} proved
the Marcinkiewicz conjecture. The
higher-dimensional Marcinkiewicz integral was introduced by Stein
\cite{s58} in 1958.  Let $\Omega$
be homogeneous of degree zero in $\rn$ for $n\geq 2$, integrable and
have mean value zero  on the unit sphere $S^{n-1}$. The
higher-dimensional Marcinkiewicz integral $\cm_{\Omega}$ is then defined by
$$
\cm_{\Omega}(f)(x):=\lf[\dint^{\fz}_0\lf|\int_{|x-y|< t}
\frac{\Omega(x-y)}{|x-y|^{n-1}}f(y)\,dy\r |^2\, \dfrac{dt}{t^3}\r
]^{1/2},\quad x\in\rn.
$$
Stein \cite{s58} proved that, if $\Omega\in {\rm
Lip}_{\alpha}(S^{n-1})$ for some $\alpha\in(0,1]$, then
$\cm_{\Omega}$ is bounded on $L^p(\rn)$ for any $p\in (1,2]$ and
also bounded from $L^1(\rn)$ to $L^{1,\,\infty}(\rn)$. Since then,
many papers focus on the boundedness of this
operator on various function spaces. We refer the reader to see
\cite{tw,dlx,dlz,dxy,hly03,hmy,l08,w05,yz}
for its developments and applications.

On the other hand, many results from real analysis and harmonic analysis on
the classical Euclidean spaces have been extended to the space of
homogeneous type introduced by Coifman and Weiss \cite{cw71}.
Recall that a metric space $(\cx,\,d)$ equipped with a Borel measure
$\mu$ is called a {\it space of homogeneous type}, if
$(\cx,\,d,\,\mu)$ satisfies the following {\it measure doubling
condition} that there exists a positive constant $C_\mu$ such that,
for all balls $B(x,\,r):=\{y\in\cx:\ d(x,\,y)<r\}$ with $x\in\cx$
and $r\in(0,\,\fz)$,
\begin{equation}\label{e1.1}
\mu(B(x,\,2r))\le C_\mu\mu(B(x,\,r)).
\end{equation}
Moreover, it is known that many results concerning the theory of
Calder\'on-Zygmund operators and function spaces remain valid even
for non-doubling measures; see, for example
\cite{t01,t03,ntv,cmy,cs,j05,hly07,yyh}. In
particular, let $\mu$ be a non-negative Radon measure on $\rn$ which
only satisfies the {\it polynomial growth condition} that there
exist positive constants $C_0$ and $\kz\in(0, n]$ such that, for all
$x\in\rn$ and $r\in(0,\,\fz)$,
\begin{equation}\label{e1.2}
\mu(B(x,\,r))\le C_0 r^\kz,
\end{equation}
where $B(x,\,r):=\{y\in\rn:\, |x-y|<r\}$. Such a measure $\mu$ need
not satisfy the doubling condition \eqref{e1.1}. The analysis with
non-doubling measures plays a striking role in solving the
long-standing open Painlev\'e problem by Tolsa in \cite{t03}. In
\cite{hly07}, the authors introduced the Marcinkiewicz integral on
$\rn$ with a measure as in \eqref{e1.2}. Moreover, under the
assumption that the Marcinkiewicz integral is bounded on $L^2(\mu)$,
the authors then obtained its boundedness, respectively, from
$L^1(\mu)$ into $L^{1,\fz}(\mu)$, from the atomic Hardy space
$H^1(\mu)$ into $L^1(\mu)$ or from $L^\fz(\mu)$ into the space
$\rblo(\mu)$.

However, as pointed out by Hyt\"onen in \cite{h10} that the measures
satisfying the polynomial growth condition are different from, not
general than, the doubling measures. Hyt\"onen \cite{h10} introduced
a new class of metric measure spaces which satisfy the so-called
upper doubling condition and the geometrically doubling condition
(see also, respectively, Definitions \ref{d1.1} and \ref{d1.2} below).
This new class of metric measure spaces is called the non-homogeneous
space, which includes both the spaces of
homogeneous type and metric spaces with the measures satisfying
\eqref{e1.2} as special cases.

From now on, we always assume that $(\cx,d,\mu)$ is a
non-homogeneous space in the sense of Hyt\"onen \cite{h10}.
In this setting, Hyt\"onen \cite{h10}
introduced the space $\rbmo(\mu)$, the \emph{space of the regularized {\rm BMO}},
and Hyt\"onen and Martikainen \cite{hm} further established a version of $Tb$ theorem.
Later, Hyt\"onen, Da. Yang and Do. Yang \cite{hyy10} studied the
\emph{atomic Hardy space $H^1(\mu)$} and proved that the dual space of
$H^1(\mu)$ is just the space $\rbmo(\mu)$. Some of results in
\cite{hyy10} were also independently obtained by Bui and Duong
\cite{ad10} via different approaches. Moreover, Lin and Yang
\cite{ly11} introduced the space $\rblo(\mu)$
(the \emph{space of the regularized {\rm BLO}})
and applied this space to the boundedness of the maximal Calder\'on-Zygmund operators.
Several equivalent characterizations for the boundedness of the
Calder\'on-Zygmund operators and the maximal Calder\'on-Zygmund
operators were established in \cite{hlyy, lyy, hmy2,lmy}. Some
weighted norm inequalities for the multilinear Calder\'on-Zygmund
operators were presented by Hu, Meng and Yang in \cite{hmy1}. Very
recently, Fu, Yang and Yuan \cite{fyy} established the boundedness
of multilinear commutators of Calder\'on-Zygmund operators with
$\rbmo(\mu)$ functions on Orlicz spaces. Moreover, by a method
different from the classical one, Lin and Yang \cite{ly12} proved an
interpolation result that a sublinear operator, which is bounded from
$H^1(\mu)$ to $L^{1,\,\fz}(\mu)$ and from $L^\fz(\mu)$ to
$\rbmo(\mu)$, is also bounded on $L^p(\mu)$ for all $p\in(1,\,\fz)$.
More developments on harmonic analysis in this setting can be found
in the monograph \cite{yyh}.

The main purpose of this paper is to generalize and improve the
corresponding results in \cite{hly07} for $\cx:=\rn$ with a measure
$\mu$ as in \eqref{e1.2} to the present setting $(\cx,\,d,\,\mu)$.
Precisely, we prove that the $L^p(\mu)$ boundedness with
$p\in(1,\,\fz)$ of the Marcinkiewicz integral is equivalent to
either of its boundedness from $L^1(\mu)$ into $L^{1,\fz}(\mu)$ or
from the atomic Hardy space $H^1(\mu)$ into $L^1(\mu)$.
As for the endpoint case of $p=\fz$, we show that,
if the Marcinkiewicz integral is bounded from
$H^1(\mu)$ into $L^1(\mu)$, then it is bounded from $L^\fz(\mu)$ into
the space $\rblo(\mu)$, which is a proper subset of
$\rbmo(\mu)$. Moreover, we prove that, if
the Marcinkiewicz integral is bounded from $L_b^\fz(\mu)$ (the
\emph{set of all $L^\fz(\mu)$ functions with bounded support}) into the space ${\rm
RBMO}(\mu)$, then it is also bounded from the finite atomic
Hardy space $H_{\rm fin}^{1,\,\fz}(\mu)$ into $L^1(\mu)$. These
results essentially improve the existing results.

To state our main results, we first recall some necessary notions and
notation. We start with the notion of the upper doubling and
geometrically doubling metric measure space introduced in
\cite{h10}.
\begin{definition}\label{d1.1}
A metric measure space $(\cx,\,d,\,\mu)$ is called {\it upper
doubling}, if $\mu$ is a Borel measure on $\cx$ and there exist a
dominating function $\lz:\ \cx\times(0,\,\fz)\rightarrow (0,\,\fz)$
and a positive constant $C_\lz$ such that, for each $x\in\cx$,
$r\rightarrow \lz(x,\,r)$ is non-decreasing and, for all  $x\in\cx$
and $r\in(0,\,\fz)$,
\begin{equation}\label{e1.3}
\mu(B(x,\,r))\le\lz(x,\,r)\le C_\lz\lz(x,\,r/2).
\end{equation}
\end{definition}

\begin{remark}\label{r1.1}
(i) Obviously, a space of homogeneous type is a special case of
upper doubling spaces, where one can take the dominating function
$\lz(x,\,r):=\mu(B(x,\,r))$. Moreover, let $\mu$ be a non-negative
Radon measure on $\rn$ which only satisfies the polynomial growth
condition. By taking $\lz(x,\,r):= Cr^\kz$, we see that
$(\rn,\,|\cdot|,\,\mu)$ is also an upper doubling measure space.

(ii) It was proved in \cite{hyy10} that there exists a dominating
function $\wz\lz$ related to $\lz$ satisfying the property that
there exists a positive constant $C_{\wz\lz}$ such that
$\wz\lz\le\lz$, $C_{\wz\lz}\le C_{\lz}$ and, for all $x,\,y\in\cx$
with $d(x,\,y)\le r$,
\begin{equation}\label{e1.4}
\wz\lz(x,\,r)\le C_{\wz\lz}\wz\lz(y,\,r).
\end{equation}
Based on this, in this paper, we {\it always assume} that the
dominating function $\lz$ also satisfies \eqref{e1.4}.

\end{remark}

We now recall the notion of the geometrically doubling space
(see, for example, \cite{h10}).

\begin{definition}\label{d1.2}
A metric space $(\cx,\,d)$ is said to be {\it geometrically doubling}, if
there exists some $N_0\in\nn:=\{1,2,\ldots\}$ such that, for
any ball $B(x,\,r)\subset\cx$, there exists a finite ball covering
$\{B(x_i,\,r/2)\}_i$ of $B(x,\,r)$ such that the {\it cardinality}
of this covering is at most $N_0$.
\end{definition}

\begin{remark}\label{r1.2}
Let $(\cx,\,d)$ be a metric space. In \cite[Lemma 2.3]{h10},
Hyt\"onen showed that the following statements are mutually
equivalent:
\begin{enumerate}
\item[(i)] $(\cx,\,d)$ is geometrically doubling.

\item[(ii)] For any $\ez\in(0,\,1)$ and ball $B(x,\,r)\subset\cx$,
there exists a finite ball covering $\{B(x_i,\,\ez r)\}_i$ of
$B(x,\,r)$ such that the cardinality of this covering is at most
$N_0\ez^{-n}$, here and in what follows, $N_0$ is as in Definition
\ref{d1.2} and $n:=\log_2 N_0$.

\item[(iii)] For every $\ez\in(0,\,1)$, any ball $B(x,\,r)\subset\cx$
can contain at most $N_0\ez^{-n}$ centers $\{x_i\}_i$ of disjoint
balls with radius $\ez r$.

\item[(iv)] There exists $M\in\nn$ such that any ball
$B(x,\,r)\subset\cx$ can contain at most $M$ centers $\{x_i\}_i$ of
disjoint balls $\{B(x_i,\,r/4)\}^M_{i=1}$.

\end{enumerate}
\end{remark}

The following coefficients $\dz(B,\,S)$ for all balls $B$ and $S$
were introduced in \cite{h10} as analogues of Tolsa's numbers
$K_{Q,\,R}$ in \cite{t01}; see also \cite{hyy10}.

\begin{definition}\label{d1.3}
For all balls $B\subset S$, let
$$
\dz(B,\,S):=\int_{(2S)\setminus
B}\frac{d\mu(x)}{\lz(c_B,d(x,\,c_B))},
$$
where above and in that follows, for a ball $B:= B(c_B,\,r_B)$ and
$\rho\in(0,\,\fz)$, $\rho B:= B(c_B,\,\rho r_B)$.
\end{definition}

The following atomic Hardy space was introduced in \cite{hyy10} and
a slight different equivalent variant was independently introduced
in \cite{ad10}. In what follows, $L^1_\loc(\mu)$ denotes the {\it
space of all $\mu$-locally integrable functions}.

\begin{definition}\label{d1.4}
Let $\rho\in(1,\,\fz)$ and $p\in (1,\,\fz]$. A function $b\in
L_\loc^1(\mu)$ is called a $(p,\,1)_\lz$-{\it atomic block}, if
\begin{enumerate}
\item[(i)] there exists some ball $B$ such that $\supp (b)\subset B$;

\item[(ii)] $\int_\cx b(x)\,d\mu(x)=0$;

\item[(iii)] for any $j\in\{1,\,2\}$, there exist a function $a_j$
supported on a ball $B_j\subset B$ and $\kz_j\in\cc$ such that
$$
b=\kz_1a_1+\kz_2a_2
$$
and
$$
\|a_j\|_{L^p(\mu)}\le[\mu(\rho B_j)]^{1/p-1}[1+\dz(B_j,\,B)]^{-1}.
$$
Moreover, let
$$
|b|_{H_{\rm atb}^{1,\,p}(\mu)}:=|\kz_1|+|\kz_2|.
$$
\end{enumerate}

\end{definition}

\begin{definition}\label{d1.5}
Let $p\in(1,\,\fz]$.

(1) The {\it space}  $H_{\rm fin}^{1,\,p}(\mu)$ is defined to be the \emph{set of all
finite linear combinations of $(p,\,1)_\lz$-atomic blocks}. The {\it
norm} of $f$ in $H_{\rm fin}^{1,\,p}(\mu)$ is defined by
$$
\|f\|_{H_{\rm fin}^{1,\,p}(\mu)}:= \inf
\lf\{\sum_{j=1}^N|b_j|_{H_{\rm atb}^{1,\,p}(\mu)}:\ f=\sum_{j=1}^N
b_j,\,\mbox{$b_j$ is a $(p,\,1)_\lz$-atomic block, $N\in\nn$}\r\}.
$$

(2) A function $f\in L^1(\mu)$ is said to belong to the {\it atomic
Hardy space} $H_{\rm atb}^{1,\,p}(\mu)$, if there exist
$(p,\,1)_\lz$-atomic blocks $\{b_j\}_{j\in\nn}$ such that
$f=\sum_{j=1}^\fz b_j$ and $\sum_{j=1}^\fz
|b_j|_{H_{\rm atb}^{1,\,p}(\mu)}<\fz$. The {\it norm} of $f$ in
$H_{\rm atb}^{1,\,p}(\mu)$ is defined by
$$
\|f\|_{H_{\rm atb}^{1,\,p}(\mu)}:= \inf
\lf\{\sum_j|b_j|_{H_{\rm atb}^{1,\,p}(\mu)}\r\},
$$
where the infimum is taken over all the possible decompositions of
$f$ as above.

\end{definition}

\begin{remark}\label{r1.3}
(1) It was proved in \cite{hyy10} that, for each $p\in(1,\,\fz]$, the
atomic Hardy space $H_{\rm atb}^{1,\,p}(\mu)$ is independent of the
choice of $\rho$ and that, for all $p\in(1,\,\fz)$, the spaces
$H_{\rm atb}^{1,\,p}(\mu)$ and $H_{\rm atb}^{1,\,\fz}(\mu)$ coincide with
equivalent norms. Thus, in the following, we denote
$H_{\rm atb}^{1,\,p}(\mu)$ simply by $H^1(\mu)$.

(2) When $\mu(\cx)<\fz$, as in the space of homogeneous type, the
constant function having value $[\mu(\cx)]^{-1}$ is also regarded
as a $(p,\,1)_\lz$-atomic block (see \cite[p. 591]{cw77}) and,
moreover, $\|[\mu(\cx)]^{-1}\|_{H^1(\mu)}\le 1$.
\end{remark}

We now recall the definition of the space $\rbmo(\mu)$ introduced in
\cite{h10}.

\begin{definition}\label{d1.6}
Let $\rho\in(1,\,\fz)$. A function $f\in L^1_\loc(\mu)$ is said to
be in the {\it space} $\rbmo(\mu)$, if there exist a positive
constant $C$ and a number $f_B$ for any ball $B$ such that, for all
balls $B$,
\begin{equation*}
\dfrac{1}{\mu(\rho B)}\dint_B |f(y)-f_B|\,d\mu(y)\le C
\end{equation*}
and, for balls $B\subset S$,
\begin{equation*}
|f_B-f_S|\le C[1+\dz(B,\,S)].
\end{equation*}
Moreover, the {\it norm} of $f$ in $\rbmo(\mu)$ is defined to be the
minimal constant $C$ as above and denoted by $\|f\|_{\rbmo(\mu)}$.
\end{definition}

It was proved in \cite[Lemma 4.6]{h10} that the space $\rbmo(\mu)$
is independent of the choice of $\rho$.

Let $K$ be a locally integrable function on
$(\cx\times\cx)\setminus\{(x,\,x):\ x\in\cx\}$. Assume that there
exists a positive constant $C$ such that, for all $x,\,y\in\cx$ with
$x\ne y$,
\begin{equation}\label{e1.5}
|K(x,\,y)|\le C\frac{d(x,\,y)}{\lz(x,d(x,\,y))}
\end{equation}
and, for all $y,\,y'\in\cx$,
\begin{equation}\label{e1.6}
\int_{d(x,\,y)\ge 2d(y,\,y')}\big[|K(x,\,y)-K(x,\,y')|
+|K(y,\,x)-K(y',\,x)|\big]\frac{1}{d(x,\,y)}\,d\mu(x)\le C.
\end{equation}
The Marcinkiewicz integral $\cm(f)$ associated to the above kernel
$K$ is defined by setting, for all $x\in\cx$,
\begin{equation}\label{e1.7}
\cm(f)(x):=\lf[\int_0^\fz\lf|\int_{d(x,\,y)<
t}K(x,\,y)f(y)\,d\mu(y)\r|^2\,\frac{dt}{t^3}\r]^{1/2}.
\end{equation}
Obviously, by taking $\lz(x,\,r):= Cr^n$, we see that, in the
classical Euclidean space $\rn$, if
$$K(x,\,y):=\dfrac{1}{|x-y|^{n-1}}\Omega(x-y)$$
with $\Omega$ being homogeneous of degree zero and $\Omega\in {\rm
Lip}_{\alpha}(S^{n-1})$ for some $\alpha\in(0,1]$, then $K$
satisfies \eqref{e1.5} and \eqref{e1.6}, and $\cm$ in \eqref{e1.7}
is just the Marcinkiewicz integral $\cm_\Omega$ introduced by Stein
in \cite{s58}. Thus, $\cm$ in \eqref{e1.7} is a natural
generalization of the classical Marcinkiewicz integral in the
present setting.

One of main results of this article is as follows.

\begin{theorem}\label{t1.1}
Let $K$ satisfy \eqref{e1.5} and \eqref{e1.6}, and $\cm$ be as in
\eqref{e1.7}. Then the following four statements are equivalent:
\begin{itemize}

\item[{\rm (i)}] $\cm$ is bounded on $L^{p_0}(\mu)$ for some
$p_0\in(1,\,\fz)$;

\item[{\rm (ii)}] $\cm$ is bounded from $L^1(\mu)$ into
$L^{1,\,\fz}(\mu)$;

\item[{\rm (iii)}] $\cm$ is bounded on $L^p(\mu)$ for all
$p\in(1,\,\fz)$;

\item[{\rm (iv)}] $\cm$ is bounded from $H^1(\mu)$ into $L^1(\mu)$.
\end{itemize}
\end{theorem}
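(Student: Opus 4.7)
The plan is to establish the four-way equivalence via the cyclic chain $\mathrm{(iii)}\Rightarrow\mathrm{(i)}\Rightarrow\mathrm{(ii)}\Rightarrow\mathrm{(iv)}\Rightarrow\mathrm{(iii)}$. The first implication is vacuous. For $\mathrm{(i)}\Rightarrow\mathrm{(ii)}$ I would invoke the Calder\'on--Zygmund decomposition of $L^1(\mu)$-functions in the upper doubling, geometrically doubling metric setting developed in \cite{hyy10}: at any level $\lz>0$, write $f = g+\sum_j h_j$ where $\|g\|_{L^{p_0}(\mu)}^{p_0}\ls\lz^{p_0-1}\|f\|_{L^1(\mu)}$, each $h_j$ is supported in a ball $B_j$ with (approximate) vanishing integral, $\sum_j\|h_j\|_{L^1(\mu)}\ls\|f\|_{L^1(\mu)}$, and $\sum_j\mu(2B_j)\ls\lz^{-1}\|f\|_{L^1(\mu)}$. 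The distributional bound $\mu\{|\cm f|>\lz\}$ then splits into a contribution from $g$ (controlled by $\mathrm{(i)}$ and Chebyshev), the measure of $\bigcup_j 2B_j$, and $\lz^{-1}\sum_j\int_{\cx\setminus 2B_j}|\cm(h_j)|\,d\mu$; the last piece is handled via the cancellation of $h_j$ and \eqref{e1.6}, together with a careful inspection of the three $t$-ranges in \eqref{e1.7} in which the ball $\{y:d(x,y)<t\}$ respectively misses, partially intersects, or contains $B_j$.

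For $\mathrm{(ii)}\Rightarrow\mathrm{(iv)}$ I would establish $\|\cm b\|_{L^1(\mu)}\ls|b|_{H_{\rm atb}^{1,\,p}(\mu)}$ for every $(p,\,1)_\lz$-atomic block $b=\kz_1 a_1+\kz_2 a_2$ supported in $B$ with $\supp(a_j)\subset B_j\subset B$ and $\int_\cx b\,d\mu=0$. Split $\int_\cx\cm(b)\,d\mu=\int_{2B}+\int_{\cx\setminus 2B}$: the outside integral is treated via the same $t$-range analysis and the cancellation of $b$; the inside integral is decomposed atom by atom into $\int_{2B_j}\cm(a_j)\,d\mu+\int_{2B\setminus 2B_j}\cm(a_j)\,d\mu$. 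The $2B\setminus 2B_j$ piece is bounded pointwise via \eqref{e1.5} and an annular-dyadic decomposition, producing a factor of order $\dz(B_j,B)$ that is absorbed by the atom normalization $[1+\dz(B_j,B)]^{-1}$. On $2B_j$, the weak-type hypothesis $\mathrm{(ii)}$ is combined with Kolmogorov's inequality and the atomic $L^p$-normalization $\|a_j\|_{L^p(\mu)}\le[\mu(\rho B_j)]^{1/p-1}[1+\dz(B_j,B)]^{-1}$ through H\"older's inequality to recover a genuine $L^1$-bound of order $[1+\dz(B_j,B)]^{-1}$. Summing over both atoms gives $\|\cm b\|_{L^1(\mu)}\ls|\kz_1|+|\kz_2|$.

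For $\mathrm{(iv)}\Rightarrow\mathrm{(iii)}$ I would invoke the companion main theorem of this paper, which shows that $\mathrm{(iv)}$ implies the endpoint estimate $\cm\colon L^\fz(\mu)\to\rblo(\mu)\subset\rbmo(\mu)$. Combined with $\cm\colon H^1(\mu)\to L^1(\mu)\hookrightarrow L^{1,\fz}(\mu)$, the interpolation theorem of Lin and Yang \cite{ly12} yields $L^p(\mu)$-boundedness of $\cm$ for every $p\in(1,\fz)$, closing the loop.

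The main obstacle will be the step $\mathrm{(ii)}\Rightarrow\mathrm{(iv)}$, in particular the $2B_j$ estimate: weak-type $(1,\,1)$ information by itself yields only $L^q$-bounds for $q<1$ via Kolmogorov, so one must delicately balance the Kolmogorov exponent against the atomic $L^p$-normalization via H\"older's inequality, keeping all bounds uniformly summable against the $\dz(B_j,B)$-coefficients. Additional care is required for the outside-$2B$ estimate, where the square-function structure of $\cm$ forces a separate treatment of the three $t$-ranges of the inner $y$-integral.
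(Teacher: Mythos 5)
Your step (ii) $\Rightarrow$ (iv) contains a genuine gap. On the local piece $\int_{2B_j}\cm(a_j)\,d\mu$ you propose to combine the weak $(1,1)$ hypothesis with Kolmogorov's inequality and H\"older's inequality, but Kolmogorov applied to a weak $(1,1)$ operator only yields
\begin{equation*}
\int_{2B_j}[\cm(a_j)(x)]^q\,d\mu(x)\ls[\mu(2B_j)]^{1-q}\|a_j\|^q_{L^1(\mu)}
\end{equation*}
for $q\in(0,\,1)$; the layer-cake bound for the $q=1$ integral, $\int_0^\fz\min\{\mu(2B_j),\,C\lambda^{-1}\|a_j\|_{L^1(\mu)}\}\,d\lambda$, diverges logarithmically, and no H\"older inequality upgrades an $L^q$ bound with $q<1$ to an $L^1$ bound on a set of finite measure --- by Jensen the $L^q$ average with $q<1$ is \emph{dominated by} the $L^1$ average, so the inequality runs the wrong way. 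The atomic normalization cannot rescue this: to exploit $\|a_j\|_{L^p(\mu)}\le[\mu(\rho B_j)]^{1/p-1}[1+\dz(B_j,\,B)]^{-1}$ on $2B_j$ you need a weak or strong $(p,p)$ bound for some $p>1$, which is precisely what is not yet available at that point of your cycle. This is why the paper never proves (ii) $\Rightarrow$ (iv) directly: it proves (ii) $\Rightarrow$ (iii) first, by passing to the power $[\cm(f)]^r$ with $r\in(0,\,1)$ (where Kolmogorov-type information is genuinely usable) and running a good-$\lambda$ argument with the operators $M^\sharp_r$ and $N_r$ (Lemmas \ref{l3.1}, \ref{l3.2} and \ref{l3.4}, after splitting $f$ at height $\ell$), and only then deduces (iv) from the strong $L^{p_0}(\mu)$ bound via Theorem \ref{t2.1}(ii), in which the local term $\int_{2B_j}\cm(a_j)\,d\mu$ is handled by H\"older with the exponent $p_0>1$.

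Your step (iv) $\Rightarrow$ (iii) is circular as written: you invoke the companion endpoint theorem (Theorem \ref{t1.2}(i)), but in the paper that statement is itself deduced from Theorem \ref{t1.1} together with Theorem \ref{t2.1}. The idea is repairable --- one can prove directly from the $H^1(\mu)\to L^1(\mu)$ hypothesis that $\cm$ maps $L^\fz_b(\mu)$ into $\rbmo(\mu)$ by the correction trick of Lemma \ref{l3.3} (subtracting a multiple of $\chi_{B_0}$ on a distant doubling ball to manufacture an atomic block), and then apply the interpolation result of Lemma \ref{l2.5} --- but this must be carried out, not cited. The paper instead proves (iv) $\Rightarrow$ (iii) by the same $M^\sharp_r$/$N_r$ machinery, using the Calder\'on-Zygmund decomposition of Lemma \ref{l2.2} to write $f=g+h$ with $\|g\|_{L^\fz(\mu)}\ls\ell$ and $h\in H^1(\mu)$ with $\|h\|_{H^1(\mu)}\ls\ell^{1-p}\|f\|^p_{L^p(\mu)}$. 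Your remaining steps, (iii) $\Rightarrow$ (i) and (i) $\Rightarrow$ (ii), agree with the paper (the latter is Theorem \ref{t2.1}(i)).
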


Comparing with the corresponding result in \cite{hly07}, Theorem
\ref{t1.1} makes an essential improvement.

As for the endpoint case of $p=\fz$, we obtain the following result.
Recall that $L^\fz_b(\mu)$ denotes the {\it set of all $L^\fz(\mu)$
functions with bounded support} and $\rblo(\mu)$ the regularized {\rm BLO}
space introduced in \cite{ly11} (see also Definition \ref{d2.2} below).

\begin{theorem}\label{t1.2}
Let $K$ satisfy \eqref{e1.5} and \eqref{e1.6}, and $\cm$ be as in
\eqref{e1.7}.
\begin{itemize}

\item[{\rm (i)}] If $\cm$ is bounded from $H^1(\mu)$ into
$L^1(\mu)$, then, for $f\in L^\fz(\mu)$, $\cm(f)$ is either infinite
everywhere or finite $\mu$-almost everywhere, more precisely, if
$\cm(f)$ is finite at some point $x_0\in\cx$, then $\cm(f)$ is
finite $\mu$-almost everywhere and
$$
\|\cm(f)\|_{\rblo(\mu)}\le C\|f\|_{L^\fz(\mu)},
$$
where the positive constant $C$ is independent of $f$.

\item[{\rm (ii)}] If there exists a positive constant $C$ such that, for
all $f\in L^\fz_b(\mu)$,
\begin{equation}\label{e1.8}
\|\cm(f)\|_{\rbmo(\mu)}\le C\|f\|_{L^\fz(\mu)},
\end{equation}
then $\cm$ is bounded from $H_{\rm fin}^{1,\,\fz}(\mu)$ into
$L^1(\mu)$.
\end{itemize}
\end{theorem}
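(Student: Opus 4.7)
The plan is to treat the two implications separately. For (i), fix $f \in L^\fz(\mu)$ and a ball $B$, and decompose $f = f_1 + f_2$ with $f_1 := f\chi_{(6/5)B}$. By Theorem~\ref{t1.1}, the assumption that $\cm$ maps $H^1(\mu)$ into $L^1(\mu)$ already delivers the weak type $(1,1)$ boundedness, so $\cm(f_1)$ is finite $\mu$-almost everywhere on $B$. For the far piece $f_2$, I will use Minkowski's integral inequality in $L^2(dt/t^3)$ together with the regularity \eqref{e1.6} to bound the pointwise oscillation $|\cm(f_2)(x) - \cm(f_2)(x')|$ for $x,\,x' \in B$ by $C\|f\|_{L^\fz(\mu)}$, independently of $B$. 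The finite-or-infinite dichotomy then follows: once $\cm(f)(x_0) < \fz$ at some $x_0 \in \cx$, the sublinearity $\cm(f_2) \le \cm(f) + \cm(f_1)$ together with the finite-a.e.\ property of $\cm(f_1)$ forces $\cm(f_2)$ to be finite at some point of any ball containing $x_0$, whereupon the oscillation estimate propagates finiteness throughout that ball; iterating over balls pushes finiteness to all of $\cx$. The same oscillation estimate simultaneously furnishes the local mean bound against $\einf_B \cm(f)$ demanded by the definition of $\rblo(\mu)$.

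For the $\rblo(\mu)$ transition condition across balls $B \subset S$, I would chain through nested balls $B =: B^{(0)} \subset B^{(1)} \subset \cdots \subset B^{(N)} := S$ with doubling radii, so that $N \ls 1 + \dz(B,S)$, bounding the contribution of each annular shell $B^{(k+1)} \setminus B^{(k)}$ via \eqref{e1.5} and \eqref{e1.6}.

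For (ii), given $f = \sum_{j=1}^N b_j \in H^{1,\,\fz}_{\rm fin}(\mu)$, sublinearity reduces the task to proving $\|\cm(b)\|_{L^1(\mu)} \ls |b|_{H_{\rm atb}^{1,\,\fz}(\mu)}$ for a single atomic block $b = \kappa_1 a_1 + \kappa_2 a_2$ supported in a ball $B$ with $\int_\cx b \, d\mu = 0$. I split $\|\cm(b)\|_{L^1(\mu)}$ as $\int_{2B} \cm(b)\,d\mu + \int_{\cx \setminus 2B} \cm(b)\,d\mu$. For the outer integral, the cancellation of $b$ lets one rewrite the kernel integral as
$$
\int_\cx K(x,y) b(y)\,d\mu(y) = \int_\cx [K(x,y) - K(x,\,c_B)] b(y)\,d\mu(y),
$$
and Minkowski's inequality in the $t$-variable combined with \eqref{e1.6} and Fubini yields an $L^1$ estimate of order $|\kappa_1|+|\kappa_2|$. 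For the inner integral, I decompose by atom and invoke \eqref{e1.8} to get $\|\cm(a_j)\|_{\rbmo(\mu)} \ls \|a_j\|_{L^\fz(\mu)}$; selecting a reference point $x_j$ in $2B \setminus 2B_j$ where \eqref{e1.5} directly bounds $\cm(a_j)(x_j)$ by a controllable multiple of $\|a_j\|_{L^\fz(\mu)}$, I use the $\rbmo(\mu)$ oscillation plus the coefficient $\dz(2B_j, 2B)$ to control the $L^1$ average of $\cm(a_j)$ over $2B$.

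The main obstacle is the inner estimate in (ii): converting an $\rbmo(\mu)$ bound into an $L^1$ estimate on $2B$ requires one to extract the factor $[1+\dz(B_j, B)]^{-1}$ from the atomic normalization so as to absorb the chain of $\rbmo(\mu)$ transition costs across $B_j \subset 2B_j \subset \cdots \subset B \subset 2B$. A secondary difficulty in (i) is making the ball-by-ball essential-infimum constants consistent, which requires combining the weak type $(1,1)$ bound (to keep them finite on each ball) with the kernel regularity (to keep the chaining costs bounded by $\dz(B,S)$ plus a constant).
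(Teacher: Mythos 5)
For part (i) the paper does not re-derive anything: by Theorem \ref{t1.1} the $H^1(\mu)\to L^1(\mu)$ hypothesis yields $L^{p_0}(\mu)$ boundedness, and the conclusion is then exactly Theorem \ref{t2.1}(iii). Your plan essentially re-proves Theorem \ref{t2.1}(iii), and it has two genuine problems. First, the oscillation estimate for the far part $f_2$ does not ``simultaneously furnish the local mean bound'': writing $\cm(f)\le\cm(f_1)+\cm(f_2)$ and using the oscillation bound on $\cm(f_2)$ still leaves the term $\frac{1}{\mu(\eta B)}\int_B\cm(f_1)\,d\mu$, and the weak type $(1,1)$ bound you invoke controls only the distribution function of $\cm(f_1)$, not its $L^1$ average over $B$ (Kolmogorov would give an $L^r$ average only for $r<1$). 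You need the strong $L^{p_0}(\mu)$ bound together with H\"older's inequality and the doubling of $B$, exactly as in \eqref{e2.11}; this is available to you through Theorem \ref{t1.1}, but your proposal explicitly declines to use it. Second, the chaining claim ``$N\ls 1+\dz(B,S)$'' is false in the non-homogeneous setting: the number of balls of doubling radii between $B$ and $S$ is about $\log_2(r_S/r_B)$, which can vastly exceed $1+\dz(B,S)$ (controlling accumulated costs by $\dz(B,S)$ rather than by the number of annuli is the whole point of these coefficients). No chaining is needed: for $x\in B$ one bounds $\cm(f\chi_{(5S)\setminus 5B})(x)$ directly by $\|f\|_{L^\fz(\mu)}\int_{(5S)\setminus B}[\lz(c_B,d(z,c_B))]^{-1}\,d\mu(z)\ls[1+\dz(B,S)]\|f\|_{L^\fz(\mu)}$, as in \eqref{e2.14}.

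For part (ii) your outer estimate matches the paper's term ${\rm J_1}$ and is fine. The genuine gap is the inner estimate: ``selecting a reference point $x_j$ in $2B\setminus 2B_j$'' cannot work as stated, because an $\rbmo(\mu)$ bound compares averages over doubling balls, not pointwise values; moreover $\mu(2B\setminus 2B_j)$ may vanish, and any such point may lie too close to $B_j$ for \eqref{e1.5} to give a useful pointwise bound on $\cm(a_j)$. The paper's proof of the key claim \eqref{e4.3} constructs a genuine reference ball: it takes the smallest $S$ of the form $6^jB$ with $\mu(S\setminus 2B)>0$, chooses a $(6,\,\bz_6)$-doubling ball $B_0\subset 2S$ centered in $S\setminus(6^{-1}S\cup 2B)$ so that $d(B_0,B)\ge r_B/2$, verifies $\dz(B,\,\wz{2S})\ls1$ and $\dz(B_0,\,\wz{2S})\ls1$ so that the $\rbmo$ transition between $m_{\wz B}(\cm(f))$ and $m_{B_0}(\cm(f))$ costs only a bounded multiple of $\|f\|_{L^\fz(\mu)}$, and only then uses \eqref{e1.5} to bound $\cm(f)$ pointwise on $B_0$; the degenerate case $r_B>\diam(\supp\mu)/40$ must be handled separately by a covering argument. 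Your announced ``main obstacle'' (extracting $[1+\dz(B_j,B)]^{-1}$ to absorb a chain of transition costs) is actually the easy part: that factor is needed only to absorb the single coefficient $\dz(B_j,B)$ produced by the annular integral of $\cm(a_j)$ over $(2B)\setminus 2B_j$, exactly as in the proof of Theorem \ref{t2.1}(ii), while the $\rbmo$-to-$L^1$ conversion on $2B_j$ itself carries only bounded transition costs by the construction above.
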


\begin{remark}\label{r1.4}
(i) Recall that it was proved in \cite{ly11}
that $\rblo(\mu)$ is a proper subset
of $\rbmo(\mu)$, which, together with Theorem \ref{t1.2}(i), further implies
that, if $\cm$ is bounded from $H^1(\mu)$ into $L^1(\mu)$,
then, for any $f\in L^\fz(\mu)$, $\cm(f)$ is either infinite
everywhere or
$$\|\cm(f)\|_{\rbmo(\mu)}\ls\|f\|_{L^\fz(\mu)}.$$
This is the known result for the Marcinkiewicz integral
over the classical Euclidean space $\rn$.
Moreover, Lin et al. \cite{lny} constructed a nonnegative function
belonging to $\mathrm{BMO}(\rn)$ but not to $\mathrm{BLO}(\rn)$, which further shows
that our result indeed improves the
known corresponding result even on the classical Euclidean space $\rn$.

(ii) From Theorem \ref{t1.1}, we deduce that, if $\cm$ is bounded from
$H^1(\mu)$ into $L^1(\mu)$, then it is also bounded on
$L^p(\mu)$ for all $p\in(1,\,\fz)$. Since
$L^\fz_b(\mu)\subset L^p(\mu)$ for all $p\in(1,\,\fz)$, we then see
that, if $\cm$ is bounded from $H^1(\mu)$ into
$L^1(\mu)$, then, for any $f\in L^\fz_b(\mu)$, $\cm(f)$ is finite at
some point $x_0\in\cx$, which, together with Theorem \ref{t1.2}(i),
implies that it is bounded from $L^\fz_b(\mu)$ into $\rblo(\mu)$
and hence, by (i) of this remark, it is also bounded from
$L^\fz_b(\mu)$ into $\rbmo(\mu)$.

(iii) In the present setting, it is still unclear
whether the boundedness of sublinear operators on
the atomic Hardy space can be deduced only from their behaviors on
atoms. More precisely, it is unclear whether the uniform
boundedness in some Banach space $\cb$
of a sublinear operator $T$ on all $(\fz,\,1)_\lz$-atoms can guarantee
the boundedness of $T$ from $H^1(\mu)$ to  $\cb$ or not.
Thus, under the assumption of Theorem \ref{t1.2}, it is
unclear whether the Marcinkiewicz integral $\cm$ can extends
boundedly from $H^1(\mu)$ to $L^1(\mu)$ or not.
\end{remark}

This paper is organized as follows. In Section \ref{s2}, under the
assumption that the Marcinkiewicz integral is bounded on
$L^{p_0}(\mu)$ for some $p_0\in(1,\,\fz)$, we then obtain its
boundedness, respectively, from $L^1(\mu)$ to $L^{1,\fz}(\mu)$, from
$H^1(\mu)$ to $L^1(\mu)$, from $L^\fz(\mu)$ to the space
$\rblo(\mu)$ and on $L^p(\mu)$ for all $p\in(1,\,\fz)$; see Theorem \ref{t2.1} below.
From this, we deduce that Theorem \ref{t1.1}(i) implies (ii), (iii) and (iv) of Theorem
\ref{t1.1}, which slightly improves the corresponding result in
\cite{hly07} by relaxing the assumption that the Marcinkiewicz
integral is bounded on $L^2(\mu)$ into that it
is bounded on $L^{p_0}(\mu)$ for some $p_0\in(1,\,\fz)$.

In Section \ref{s3}, we prove Theorem \ref{t1.1}.
Indeed, by Theorem \ref{t2.1} and an obvious fact that Theorem \ref{t1.1}(iii) implies Theorem \ref{t1.1}(i),
to prove Theorem \ref{t1.1}, we only need to
prove that Theorem \ref{t1.1}(ii) implies Theorem \ref{t1.1}(iii) and
Theorem \ref{t1.1}(iv) implies Theorem \ref{t1.1}(iii).
To this end, we need some fine estimates on the sharp maximal function $M^\sharp_r$ (see \eqref{e3.1} below)
and the non-centered doubling Hardy-Littlewood
maximal function $N_r$ (see \eqref{e3.2} below); for example, see the technical
Lemmas \ref{l3.1} and \ref{l3.2} concerning with the
operators $M^\sharp_r$ and $N_r$ from \cite{ly12}
and the estimate for $M^\sharp_r(\cm(f))$ in Lemma \ref{l3.4}.
We also need to consider the decomposition of the function $f$;
for example, in the proof that Theorem \ref{t1.1}(ii) implies Theorem \ref{t1.1}(iii),
for any fixed $\ell\in(0,\,\fz)$, we split $f$ into $f_1$ and $f_2$ with
$f_1:=f\chi_{\{y\in\cx:\ |f(y)|>\ell\}}$ and
$f_2:=f\chi_{\{y\in\cx:\ |f(y)|\le\ell\}}$, while in the proof that
Theorem \ref{t1.1}(iv) implies Theorem \ref{t1.1}(iii), we use
the Calder\'on-Zygmund decomposition from \cite[Theorem 6.3]{ad10} (see also Lemma \ref{l2.2} below).
Here and in what follows, for any $\mu$-measurable set
$E$, $\chi_E$ denotes its {\it characteristic function}.
Based on these facts, by some argument similar to that used in the proof of
\cite[Theorem 1.1]{ly12}, we then complete the proof of Theorem \ref{t1.1}.

Section \ref{s4} is devoted to the proof of Theorem \ref{t1.2}.
Indeed, Theorem \ref{t1.2}(i) can be deduced directly from Theorems \ref{t1.1} and \ref{t2.1}.
By using a technical estimate for $\cm$ (see \eqref{e4.3} below) and
some argument used in the proof of Theorem \ref{t2.1}, we then obtain the desired conclusion
of Theorem \ref{t1.2}(ii).

We finally make some conventions on notation. Throughout this paper,
we denote by $C$ a \emph{positive constant} which is independent of the
main parameters involved, but may vary from line to line. \emph{Positive
constants} with subscripts, such as $C_1$, do not change in different
occurrences. The subscripts of a constant indicate the parameters it
depends on. The {\it symbol} $Y\ls Z$ means that there exists a
positive constant $C$ such that $Y\le CZ$. The {\it symbol} $A\sim
B$ means that $A\ls B\ls A$. For any ball $B\subset\cx$, we denote
its {\it center} and {\it radius}, respectively, by $c_B$ and $r_B$
and, moreover, for any $\rho\in(1,\,\fz)$, the ball $B(c_B,\,\rho
r_B)$ by $\rho B$. Given any $q\in(1,\,\fz)$, let $q':=q/(q-1)$
denote its {\it conjugate index}. Also, let $\nn:=\{1,2,\ldots\}$.

\section{Boundedness of Marcinkiewicz integrals\label{s2}}

In this section, under the assumption that the
Marcinkiewicz integral is bounded on $L^{p_0}(\mu)$ for some
$p_0\in(1,\,\fz)$, we then obtain its boundedness on Lebesgue spaces
and Hardy spaces. We first recall the notions of
$(\az,\,\bz)$-doubling property and the space $\rblo(\mu)$.

\begin{definition}\label{d2.1}\rm
Let $\az,\,\bz\in(1,\,\fz)$. A ball $B:=B(x,\,r)\subset\cx$ is called
{\it $(\az,\,\bz)$-doubling}, if $\mu(\az B)\le\bz\mu(B)$.
\end{definition}

It was proved in \cite{h10} that, if a metric measure space
$(\cx,\,d,\,\mu)$ is upper doubling and
$\bz>C_\lz^{\log_2\az}=:\az^\nu$, then, for every ball
$B(x,\,r)\subset\cx$, there exists some $j\in\zz_+:=\nn\cup\{0\}$
such that $\az^j B$ is $(\az,\,\bz)$-doubling. Moreover, let
$(\cx,\,d)$ be geometrically doubling, $\bz>\az^n$ with $n:=\log_2
N_0$ and $\mu$ be a Borel measure on $\cx$ which is finite on bounded
sets. Hyt\"onen \cite{h10} also showed that, for $\mu$-almost every
$x\in\cx$, there exist arbitrarily small $(\az,\,\bz)$-doubling
balls centered at $x$. Furthermore, the radius of these balls may be
chosen to be of the form $\az^{-j}r$ for $j\in\nn$ and any
preassigned number $r\in(0,\,\fz)$. Throughout this paper, for any
$\az\in(1,\,\fz)$ and ball $B$, ${\wz B}^\az$ always denotes the {\it
smallest $(\az,\,\bz_\az)$-doubling ball of the form $\az^j B$ with
$j\in\nn$}, where
\begin{equation}\label{e2.1}
\bz_\az:=\max\lf\{\az^{3n},\,\az^{3\nu}\r\}+30^n+30^\nu
=\az^{3(\max\{n,\,\nu\})}+30^n+30^\nu.
\end{equation}
If $\az=6$, we {\it denote the ball ${\wz B}^\az$ simply by $\wz B$}.

The following space $\rblo(\mu)$ was introduced in \cite{ly11}.
Recall that the classical space ${\mathop\mathrm{BLO}}(\rn)$ was
introduced by Coifman and Rochberg \cite{cr80} and, in the setting of
$(\rn,\,|\cdot|,\,\mu)$ with $\mu$ only satisfying the polynomial
growth condition, the space $\rblo(\mu)$ was first introduced by
Jiang \cite{j05}.

\begin{definition}\label{d2.2}\rm
Let $\eta,\,\rho\in(1,\,\fz)$, and $\bz_\rho$ be as in \eqref{e2.1}.
A real-valued function $f\in L^1_{\loc}(\mu)$ is said to be in the
{\it space $\rblo(\mu)$}, if there exists a non-negative constant $C$
such that, for all balls $B$,
\begin{equation*}
\frac{1}{\mu(\eta B)}\int_B\lf[f(y)-{\mathop\einf_{{\wz
B}^\rho}}f\r]\,d\mu(y)\le C
\end{equation*}
and, for all $(\rho,\,\bz_\rho)$-doubling balls $B\subset S$,
\begin{equation*}
{\mathop\einf_B}f-{\mathop\einf_S}f\le C[1+\dz(B,\,S)].
\end{equation*}
Moreover, the $\rblo(\mu)$ {\it norm} of $f$ is defined to be the
minimal constant $C$ as above and denoted by $\|f\|_{\rblo(\mu)}$.
\end{definition}

It was proved in \cite{ly11} that $\rblo(\mu)\subset\rbmo(\mu)$ and
the definition of $\rblo(\mu)$ is independent of the choice of the
constants $\eta,\,\rho\in(1,\,\fz)$.

\begin{theorem}\label{t2.1}
Let $K$ satisfy \eqref{e1.5} and \eqref{e1.6}, and $\cm$ be as in
\eqref{e1.7}. Suppose that $\cm$ is bounded on $L^{p_0}(\mu)$ for
some $p_0\in(1,\,\fz)$. Then,
\begin{itemize}
\item[{\rm (i)}] $\cm$ is bounded from $L^1(\mu)$ into $L^{1,\,\fz}(\mu)$;

\item[{\rm (ii)}] $\cm$ is bounded from $H^1(\mu)$ into $L^1(\mu)$;

\item[{\rm (iii)}] for $f\in
L^\fz(\mu)$, $\cm(f)$ is either infinite everywhere or finite
$\mu$-almost everywhere; more precisely, if $\cm(f)$ is finite at
some point $x_0\in\cx$, then $\cm(f)$ is finite $\mu$-almost
everywhere and
$$
\|\cm(f)\|_{\rblo(\mu)}\le C\|f\|_{L^\fz(\mu)},
$$
where the positive constant $C$ is independent of $f$;

\item[{\rm (iv)}] $\cm$ is bounded on  $L^p(\mu)$ for all $p\in(1,\,\fz)$.
\end{itemize}
\end{theorem}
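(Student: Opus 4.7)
The plan is to verify (i), (ii), and (iii) individually from the single hypothesis that $\cm$ is bounded on $L^{p_0}(\mu)$, and then to deduce (iv) by interpolation: Marcinkiewicz interpolation between (i) and the $L^{p_0}$ bound yields the range $p\in(1,p_0]$, while a sharp maximal function argument in the spirit of Tolsa, combined with (iii), yields $p\in(p_0,\fz)$.

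For (i), I would run the Calder\'on--Zygmund scheme adapted to the non-homogeneous setting. Given $f\in L^1(\mu)$ and $\ell>0$, the decomposition from \cite[Theorem 6.3]{ad10} produces $f=g+\sum_j b_j$, where $g$ is essentially bounded by a constant multiple of $\ell$, $\|g\|_{L^1(\mu)}\ls\|f\|_{L^1(\mu)}$, each $b_j$ is supported on a ball $B_j$ with vanishing $\mu$-integral, and $\sum_j\mu(2B_j)\ls\ell^{-1}\|f\|_{L^1(\mu)}$. The good part is controlled via Chebyshev and the $L^{p_0}$-boundedness of $\cm$. On $\cx\setminus\bigcup_j 2B_j$ the bad part is controlled by the mean-zero condition together with the smoothness bound \eqref{e1.6}, after exchanging the spatial integration with the $t$-integral defining $\cm$ via Minkowski's inequality applied to $t\mapsto\int_{d(x,y)<t}K(x,y)b_j(y)\,d\mu(y)$.

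For (ii), by the definition of $H^1(\mu)$ and Remark \ref{r1.3}, it suffices to prove $\|\cm(b)\|_{L^1(\mu)}\ls|b|_{H^{1,p_0}_{\rm atb}(\mu)}$ uniformly for any $(p_0,1)_\lz$-atomic block $b=\kz_1 a_1+\kz_2 a_2$ with $\supp b\subset B$. For each summand $\kz_j a_j$ with $\supp a_j\subset B_j\subset B$, I would split $\int_\cx|\cm(\kz_j a_j)|\,d\mu$ into integrals over $2B_j$, $2B\setminus 2B_j$, and $\cx\setminus 2B$. H\"older and the $L^{p_0}$ bound handle $2B_j$, the atomic normalisation absorbing $[\mu(\rho B_j)]^{1-1/p_0}$; the size estimate \eqref{e1.5} handles $2B\setminus 2B_j$, the defining integral of $\dz(B_j,B)$ producing the factor $1+\dz(B_j,B)$ that is again cancelled by the normalisation. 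On $\cx\setminus 2B$ the cancellation of the \emph{whole} block $b$, together with \eqref{e1.6} and a split of the square function according to whether $t\gtrless d(x,c_B)$, gives the remaining contribution.

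For (iii), assuming $\cm(f)(x_0)<\fz$, I would fix a ball $B$ and decompose $f=f\chi_{(6/5)B}+f\chi_{\cx\setminus(6/5)B}=:f_1+f_2$. H\"older and $L^{p_0}$-boundedness yield $\int_B\cm(f_1)\,d\mu\ls\|f\|_{L^\fz(\mu)}\mu(\eta B)$, while for $f_2$ Minkowski in the square function combined with \eqref{e1.5} propagates finiteness from $x_0$ to $\mu$-almost every point and \eqref{e1.6} gives an oscillation estimate $|\cm(f_2)(x)-\cm(f_2)(y)|\ls\|f\|_{L^\fz(\mu)}$ for $x,y\in B$. Taking $f_B:=\einf_{{\wz B}^\rho}\cm(f_2)$ makes the first $\rblo(\mu)$ condition immediate; the second, on $(\rho,\bz_\rho)$-doubling pairs $B\subset S$, is obtained by telescoping through the intermediate balls $6^j B$ bridging $B$ and $S$ and applying \eqref{e1.6} at each step, producing the desired factor $1+\dz(B,S)$; property \eqref{e1.4} of $\lz$ is used throughout to shift centres when comparing $\lz(c_B,\cdot)$ with $\lz$ at other reference points. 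The main obstacle will be this non-local control of $\cm$ in both (ii) and (iii): unlike for singular integrals, \eqref{e1.6} cannot be invoked directly on the kernel because of the outer $L^2$-in-$t$ average, so one must first separate the $t$-range according to whether the truncation $\{y:d(x,y)<t\}$ does or does not reach the relevant support, estimate each regime via Minkowski together with \eqref{e1.5}--\eqref{e1.6}, and tie the pieces back together through the coefficients $\dz$ and the doubling hulls ${\wz B}^\rho$.
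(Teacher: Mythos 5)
Your treatment of (i) and of the local/annulus estimates in (ii) follows the paper's own argument (the Calder\'on--Zygmund decomposition of \cite[Theorem 6.3]{ad10} for (i); the split over $2B_j$, $(2B)\setminus 2B_j$ and $\cx\setminus 2B$ for atomic blocks in (ii)), and your route to (iv) --- Marcinkiewicz interpolation below $p_0$ plus a sharp-maximal-function argument above $p_0$ using (iii) --- is a legitimate alternative to the paper's appeal to the interpolation theorem of \cite{ly12} (Lemma \ref{l2.5}) between $H^1(\mu)\to L^{1,\,\fz}(\mu)$ and $L^\fz(\mu)\to\rbmo(\mu)$. One minor point first: in (ii) it does \emph{not} suffice, by the definition of $H^1(\mu)$ alone, to bound $\cm$ uniformly on atomic blocks (cf.\ Remark \ref{r1.4}(iii)); you must pass through a criterion such as Lemma \ref{l2.4}, whose hypothesis is precisely the weak $(1,1)$ bound you established in (i), to conclude that the estimate on blocks extends to all of $H^1(\mu)$.

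The substantive gap is in the second $\rblo(\mu)$ condition in (iii). You propose to control $m_B(\cm(f))-m_S(\cm(f))$ by telescoping through the intermediate balls $6^jB$ and ``applying \eqref{e1.6} at each step.'' Each application of \eqref{e1.6} yields a contribution bounded by a constant multiple of $\|f\|_{L^\fz(\mu)}$, so the telescoped sum is of order $N\|f\|_{L^\fz(\mu)}$ with $N\sim\log_6(r_S/r_B)$. In the non-homogeneous setting $N$ is \emph{not} controlled by $1+\dz(B,\,S)$: the measure may assign negligible mass to the intervening annuli, so that $\dz(B,\,S)$ stays bounded while $r_S/r_B$, hence $N$, is arbitrarily large. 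The factor $1+\dz(B,\,S)$ must instead come from the \emph{size} condition \eqref{e1.5}: as in \eqref{e2.14}, one estimates the single bridge term $\cm(f\chi_{(5S)\setminus 5B})(x)$ for $x\in B$ by $\|f\|_{L^\fz(\mu)}\int_{(5S)\setminus B}[\lz(c_B,\,d(z,\,c_B))]^{-1}\,d\mu(z)\ls[1+\dz(B,\,S)]\|f\|_{L^\fz(\mu)}$, which is comparable to the defining integral of $\dz(B,\,S)$; the smoothness condition \eqref{e1.6} is invoked only once, for the oscillation of $\cm(f\chi_{\cx\setminus 5S})$ over $S$. If you insist on telescoping, each step must contribute the $\dz$-increment of the corresponding annulus (obtained from \eqref{e1.5}), not a constant, so that the increments sum to $\dz(B,\,S)$ rather than to the number of scales.
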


To prove Theorem \ref{t2.1}, we first recall some necessary
technical lemmas. The following useful properties of $\dz$ were
proved in \cite{hyy10}.

\begin{lemma}\label{l2.1}
{\rm (i)} For all balls $B\subset R\subset S$, it holds true that
$\dz(B,\,R)\le\dz(B,\,S)$.

{\rm (ii)} For any $\rho\in[1,\,\fz)$, there exists a positive
constant $C$, depending on $\rho$, such that, for all balls $B\subset
S$ with $r_S\le\rho r_B$, $\dz(B,\,S)\le C$.

{\rm (iii)} For any $\az\in(1,\,\fz)$, there exists a positive
constant $\wz C$, depending on $\az$, such that, for all balls $B$,
$\dz(B,\,{\wz B}^\az)\le {\wz C}$.

{\rm (iv)} There exists a positive constant $c$ such that, for all
balls $B\subset R\subset S$, $\dz(B,\,S)\le\dz(B,\,R)+c\dz(R,\,S)$.
In particular, if $B$ and $R$ are concentric, then $c=1$.

{\rm (v)} There exists a positive constant $\wz c$ such that, for all
balls $B\subset R\subset S$, $\dz(R,\,S)\le{\wz c}[1+\dz(B,\,S)]$;
moreover, if $B$ and $R$ are concentric, then
$\dz(R,\,S)\le\dz(B,\,S)$.
\end{lemma}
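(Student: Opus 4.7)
The plan is to derive all five items directly from the definition
\[
\dz(B, S) = \int_{(2S)\setminus B}\frac{d\mu(x)}{\lz(c_B, d(x, c_B))},
\]
using only the nonnegativity of the integrand, the doubling estimate \eqref{e1.3} for $\lz$, and the center-shift estimate \eqref{e1.4}. Item (i) is immediate from the set inclusion $(2R)\setminus B \subset (2S)\setminus B$ provided by the nestedness $B \subset R \subset S$. For (ii), the hypothesis $r_S \le \rho r_B$ together with $c_B \in S$ yields $d(x, c_B) \le d(x, c_S) + d(c_S, c_B) < 2r_S + r_S \le 3\rho\, r_B$ for every $x \in 2S$, hence $2S \subset 3\rho B$; decomposing $3\rho B \setminus B$ into the roughly $\log_2(3\rho)$ dyadic shells $\{x : 2^k r_B \le d(x, c_B) < 2^{k+1} r_B\}$ and estimating both the $\mu$-measure of a shell and the integrand on it in terms of $\lz(c_B, 2^k r_B)$, with factors of $C_\lz$ coming from \eqref{e1.3}, each shell contributes at most $C_\lz$ and the total is a constant depending only on $\rho$.

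The main step will be (iii). Let $j \in \nn$ be such that ${\wz B}^\az = \az^j B$; by minimality, each $\az^i B$ with $0 \le i < j$ fails to be $(\az, \bz_\az)$-doubling, so iterating $\mu(\az^{i+1} B) > \bz_\az \mu(\az^i B)$ yields the backward bound $\mu(\az^{i+1} B) \le \bz_\az^{-(j-i-1)}\mu(\az^j B)$. Decompose $2\az^j B \setminus B$ into the shells $\az^{i+1}B \setminus \az^i B$ for $i = 0, \ldots, j-1$ together with the outermost shell $2\az^j B \setminus \az^j B$. On the $i$-th shell the integrand is at most $1/\lz(c_B, \az^i r_B)$; combining this with the iterated inequality $\lz(c_B, \az^j r_B) \le \az^{\nu(j-i)}\lz(c_B, \az^i r_B)$ obtained from \eqref{e1.3} (with $\nu := \log_2 C_\lz$) and with $\mu(\az^j B) \le \lz(c_B, \az^j r_B)$ gives
\[
\frac{\mu(\az^{i+1} B)}{\lz(c_B, \az^i r_B)} \le \bz_\az \lf(\frac{\az^\nu}{\bz_\az}\r)^{j-i}.
\]
By \eqref{e2.1}, $\bz_\az \ge \az^{3\nu} > \az^\nu$, so the geometric series in $i$ converges uniformly in $j$, while the outermost shell contributes a constant by a one-shell instance of the argument of (ii). This is the main obstacle of the lemma: the estimate is effective only because the quantitative choice of $\bz_\az$ in \eqref{e2.1} was made precisely to dominate the upper-doubling growth rate $\az^\nu$ of $\lz$.

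For (iv) and (v), the strategy is to split the integration region at the boundary of $2R$ and to swap the center of $\lz$ on the ``outer'' piece using \eqref{e1.4}. For (iv), $(2S)\setminus B = [(2R)\setminus B] \cup [(2S)\setminus(2R)]$; the first integral equals $\dz(B, R)$, and on the second, $c_B \in R$ together with $x \notin 2R$ gives $d(c_B, c_R) < r_R \le d(x, c_R)/2 \le d(x, c_B)$, so applying \eqref{e1.4} and one step of \eqref{e1.3} yields $\lz(c_R, d(x, c_R)) \le C_\lz^2\,\lz(c_B, d(x, c_B))$, whence the second part is bounded by $C_\lz^2\,\dz(R, S)$; when $c_B = c_R$, no center change is needed and the constant reduces to $1$. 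For (v), split $(2S)\setminus R = [(2R)\setminus R] \cup [(2S)\setminus(2R)]$: the first piece is a local $\dz(R, R)$-type integral, controlled by a constant via the argument of (ii), while on the second the analogous comparison $\lz(c_B, d(x, c_B)) \le C_\lz^2\,\lz(c_R, d(x, c_R))$ combined with $(2S)\setminus(2R) \subset (2S)\setminus B$ bounds it by $C_\lz^2\,\dz(B, S)$, giving $\dz(R, S) \le \wz c[1 + \dz(B, S)]$. In the concentric case the center change is trivial and $(2S)\setminus R \subset (2S)\setminus B$ alone yields $\dz(R, S) \le \dz(B, S)$.
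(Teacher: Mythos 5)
Your proposal is correct, and there is little to compare it against: the paper itself gives no proof of Lemma \ref{l2.1}, merely quoting the result from \cite{hyy10}, and your arguments are essentially the standard ones from that source (going back to Tolsa's coefficients $K_{Q,R}$), with the only genuinely delicate step --- the uniform convergence of the geometric series in (iii), which rests precisely on the choice $\bz_\az>\az^{3\nu}\ge\az^{\nu}$ in \eqref{e2.1} together with the failure of the $(\az,\bz_\az)$-doubling property for the intermediate dilates --- handled correctly, as are the center-shift estimates via \eqref{e1.4} in (iv) and (v). The one caveat (which applies equally to the cited source and is not a defect you introduced) is that in (i) the inclusion $2R\subset 2S$ is not a formal consequence of $R\subset S$ for balls with prescribed centers and radii in an arbitrary metric space; it does hold in every configuration in which the lemma is invoked in this paper, but strictly speaking it is an implicit convention rather than something ``provided by the nestedness.''
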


Now we recall the Calder\'on-Zygmund decomposition from
\cite[Theorem 6.3]{ad10}.

\begin{lemma}\label{l2.2}
Let $p\in[1,\,\fz)$, $f\in L^p(\mu)$ and $\ell\in(0,\fz)$
($\ell>\ell_0:=\gz^{\frac{1}{p}}_0[\mu(\cx)]^{-\frac{1}{p}}\|f\|_{L^p(\mu)}$ if
$\mu(\cx)<\fz$, where $\gz_0$ is any fixed positive constant
satisfying that $\gz_0>max\{C_\lz^{3\log_2 6},\,6^{3n}\}$, $C_\lz$ is
as in \eqref{e1.3} and $n:=\log_2 N_0$). Then,

{\rm (i)} there exists an almost disjoint family $\{6B_j\}_j$ of
balls such that $\{B_j\}_j$ is pairwise disjoint,
$$
\frac{1}{\mu(6^2B_j)}\int_{B_j}|f(x)|^p\,d\mu(x)
>\frac{\ell^p}{\gz_0} \quad{\text for\, all\, j,}
$$
$$
\frac{1}{\mu(6^2\eta B_j)}\int_{\eta B_j}|f(x)|^p\,d\mu(x) \le
\frac{\ell^p}{\gz_0} \quad{\text for\ all\ j\ and\ all\
\eta\in(2,\,\fz)}
$$
and
$$
|f(x)|\le \ell \quad{\text for\ \mu-almost\ every\ x\in
\cx\setminus(\cup_j6B_j);}
$$

{\rm (ii)} for each j, let $S_j$ be a $(3\times
6^2,\,C_\lz^{\log_2(3\times 6^2)+1})$-doubling ball of the family
$\{(3\times 6^2)^kB_j\}_{k\in\nn}$ and
$\oz_j:=\chi_{6B_j}/(\sum_k\chi_{6B_k})$. Then, there exists a family
$\{\fai_j\}_j$ of functions such that, for each $j$,
$\supp(\fai_j)\subset S_j$, $\fai_j$ has a constant sign on $S_j$,
$$
\int_\cx\fai_j(x)\,d\mu(x)=\int_{6B_j}f(x)\oz_j(x)\,d\mu(x),
$$
$$
\sum_j|\fai_j(x)|\le \gz\ell \quad{\text for\ \mu-almost\ every\
x\in\cx,}
$$
where $\gz$ is some positive constant, depending only on
$(\cx,\,\mu)$, and there exists a positive constant $C$, independent
of $f$, $\ell$ and $j$, such that, when $p=1$, it holds true that
$$
\|\fai_j\|_{L^\fz (\mu)}\mu(S_j)\le
C\int_\cx|f(x)\oz_j(x)|\,d\mu(x)
$$
and, when $p\in(1,\,\fz)$, it holds true that
\begin{equation*}
\lf[\int_{S_j}|\fai_j(x)|^p\,d\mu(x)\r]^{1/p}[\mu(S_j)]^{1/p'}\le
\frac{C}{\ell^{p-1}}\int_\cx |f(x)\oz_j(x)|^p\,d\mu(x);
\end{equation*}

{\rm (iii)} for $p\in(1,\,\fz)$, if, for any $j$, choosing $S_j$ in
{\rm(ii)} to be the smallest $(3\times6^2,\,C_\lz^{\log_2(3\times
6^2)+1})$-doubling ball of the family $\{(3\times 6^2)^kB_j\}_{k\in\nn}$, then
$h:=\sum_j(f\oz_j-\fai_j)\in H_{\rm atb}^{1,\,p}(\mu)$ and there exists
a positive constant $C$, independent of $f$ and $\ell$, such that
$$
\|h\|_{H_{\rm atb}^{1,\,p}}(\mu)\le
\frac{C}{\ell^{p-1}}\|f\|_{L^p(\mu)}^p.
$$
\end{lemma}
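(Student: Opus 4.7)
The plan is to follow the Calder\'on--Zygmund decomposition template of Tolsa in the non-doubling Euclidean setting and adapt its two geometric ingredients---a Vitali-type covering and the selection of a doubling dilate---to $(\cx,d,\mu)$ using Remark \ref{r1.2}(iv) (bounded overlap) and the general fact (recalled just after Definition \ref{d2.1}) that every ball admits a $(3\cdot 6^2,C_\lz^{\log_2(3\cdot 6^2)+1})$-doubling dilate. Part (i) comes from a stopping-time covering, part (ii) from a one-parameter duality argument on each doubling ball $S_j$, and part (iii) from recognizing $f\oz_j-\vz_j$ as a $(p,1)_\lz$-atomic block with a sharp $[1+\dz(B_j,S_j)]^{-1}$ factor.

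For (i), consider the set $\Omega_\ell$ of points $x\in\cx$ at which some ball $B=B(x,r)$ satisfies $\mu(6^2B)^{-1}\int_B|f|^p\,d\mu>\ell^p/\gz_0$. By the Lebesgue differentiation theorem (valid $\mu$-a.e. thanks to the existence of arbitrarily small doubling balls), $|f|\le\ell$ on $\cx\setminus\Omega_\ell$. When $\mu(\cx)<\fz$, the hypothesis $\ell>\ell_0$ combined with the explicit lower bound on $\gz_0$ forces $\Omega_\ell\ne\cx$, which in turn keeps the stopping radii bounded. For each $x\in\Omega_\ell$ pick a ball $B_x:=B(x,r_x)$ with $r_x$ essentially the largest admissible radius; maximality then yields that every $\eta B_x$ with $\eta>2$ violates the strict stopping inequality, which is the small-average conclusion. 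A Vitali-type selection based on Remark \ref{r1.2}(iv) extracts from $\{B_x\}$ a pairwise disjoint subfamily $\{B_j\}$ whose enlargement $\{6B_j\}$ covers $\Omega_\ell$ with bounded overlap, giving all three assertions of (i).

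For (ii), fix $j$ and pick $S_j$ to be a $(3\cdot 6^2,C_\lz^{\log_2(3\cdot 6^2)+1})$-doubling dilate of $B_j$, whose existence is the upper-doubling fact cited above. Set $\oz_j:=\chi_{6B_j}/\sum_k\chi_{6B_k}$; the denominator is bounded a.e.\ by the overlap constant from (i). The existence of $\vz_j$ supported in $S_j$ with $\int\vz_j\,d\mu=\int_{6B_j}f\oz_j\,d\mu$ and the stated norm control is obtained by a Hahn--Banach-style duality argument in $L^p(S_j,\mu)$: for $p=1$ simply take $\vz_j$ constant on $S_j$ with value $\mu(S_j)^{-1}\int f\oz_j\,d\mu$ and invoke the average bound from (i) at the doubling scale $S_j$ to obtain $\|\vz_j\|_{L^\fz(\mu)}\mu(S_j)\le C\int|f\oz_j|\,d\mu$; for $p\in(1,\fz)$, pick $\vz_j$ in the one-dimensional subspace realizing the $L^p$--$L^{p'}$ pairing prescribed by the mean condition. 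The pointwise bound $\sum_j|\vz_j|\le\gz\ell$ then follows from bounded overlap of $\{S_j\}$, itself a consequence of the disjointness of $\{B_j\}$, the doubling of each $S_j$, and geometric doubling.

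For (iii), the function $b_j:=f\oz_j-\vz_j$ is supported in $S_j$ and has vanishing integral. Decompose $b_j=\kz_1a_1+\kz_2a_2$ with $a_1$ a normalized multiple of $f\oz_j$ (supported in $6B_j\subset S_j$) and $a_2$ a normalized multiple of $\vz_j$ (supported in $S_j$); the size conditions of Definition \ref{d1.4} are met because $S_j$ is the \emph{smallest} doubling dilate of $B_j$, so by Lemma \ref{l2.1}(iii) one has $\dz(B_j,S_j)\ls 1$, making the factor $[1+\dz(B_j,S_j)]^{-1}$ harmless. Summing and converting $\sum_j\ell^p\mu(6^2B_j)$ into $\|f\|_{L^p(\mu)}^p/\ell^{p-1}$ via the stopping inequality from (i) yields the atomic Hardy estimate. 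The main obstacle is the simultaneous bookkeeping of three geometric families---the disjoint $\{B_j\}$, their almost-disjoint enlargements $\{6B_j\}$, and the doubling dilates $\{S_j\}$---so that all overlap and dilation constants are compatible with the sharp $[1+\dz(B_j,S_j)]^{-1}$ factor of Definition \ref{d1.4}; the explicit choice $\gz_0>\max\{C_\lz^{3\log_2 6},6^{3n}\}$ is precisely what makes this three-way compatibility work.
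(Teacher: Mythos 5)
First, a point of reference: the paper does not prove Lemma \ref{l2.2} at all --- it is quoted from Bui and Duong \cite[Theorem 6.3]{ad10}, which in turn adapts Tolsa's Calder\'on--Zygmund decomposition \cite[Lemma 7.1]{t01} --- so your proposal has to stand on its own. Parts (i) and (iii) of your sketch follow the standard template and are essentially sound: the stopping-time choice of almost-maximal radii, the covering argument via Remark \ref{r1.2}(iv), the estimate $\dz(B_j,\,S_j)\ls 1$ for the smallest doubling dilate, and the conversion of $\sum_j\mu(6^2B_j)$ into $\gz_0\ell^{-p}\|f\|_{L^p(\mu)}^p$ via the first inequality of (i) all go through as you describe.

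The genuine gap is in (ii), at the pointwise bound $\sum_j|\fai_j|\le\gz\ell$. You derive it from a claimed bounded overlap of the family $\{S_j\}$, attributed to the disjointness of $\{B_j\}$, the doubling of each $S_j$ and geometric doubling. In the non-homogeneous setting these ingredients do not yield bounded overlap: $S_j=(3\times 6^2)^{k_j}B_j$ is only the \emph{first} doubling dilate of $B_j$, the exponent $k_j$ can be arbitrarily large and varies with $j$, so arbitrarily many huge dilates $S_j$ of tiny pairwise disjoint balls $B_j$ may contain a common point. Consequently, taking $\fai_j$ constant on all of $S_j$ (your $p=1$ recipe) does not control $\sum_j|\fai_j|$; note also that this choice would make the constant $C$ in the $p=1$ norm inequality equal to $1$, which already signals that the intended construction is different. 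The standard repair --- the one actually used in \cite{ad10} and \cite{t01} --- is inductive: order the indices so that $r_{S_j}$ is non-decreasing; having built $\fai_1,\dots,\fai_{j-1}$, observe that every earlier $S_i$ meeting $S_j$ satisfies $S_i\subset 3S_j$, so the mass bounds $\int_\cx|\fai_i|\,d\mu\le C\ell^{1-p}\int_\cx|f\oz_i|^p\,d\mu$, the almost disjointness of $\{6B_i\}$, the small-average inequality of (i) applied to $3S_j$ (a dilate $\eta B_j$ with $\eta>2$) and the doubling of $S_j$ give $\sum_{i<j}\int_{S_j}|\fai_i|\,d\mu\le C\ell\,\mu(S_j)$; by Chebyshev the set $A_j:=\{x\in S_j:\ \sum_{i<j}|\fai_i(x)|\le 2C\ell\}$ has $\mu(A_j)\ge\mu(S_j)/2$, and one sets $\fai_j:=\az_j\chi_{A_j}$ with $\az_j$ fixed by the mean-value condition. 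This Chebyshev-type inductive step is the crux of the lemma and is absent from your argument.
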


The following characterization of the space $\rblo(\mu)$ was proved
in \cite{ly11}.

\begin{lemma}\label{l2.3}
Let $\rho\in(1,\,\fz)$ and $\bz_\rho$ be as in \eqref{e2.1}. If
$f\in\rblo(\mu)$, then there exists a non-negative constant $C_1$
satisfying that, for all $(\rho,\,\bz_\rho)$-doubling balls $B$,
\begin{equation*}
\frac{1}{\mu(B)}\int_B\lf[f(y)-{\mathop\einf_B} f\r]\,d\mu(y)\le
C_1
\end{equation*}
and, for all $(\rho,\,\bz_\rho)$-doubling balls $B\subset S$,
\begin{equation}\label{e2.2}
m_B(f)-m_S(f)\le C_1[1+\dz(B,\,S)],
\end{equation}
where above and in what follows, $m_B(f)$ denotes the mean of $f$ over
$B$, namely,
$$m_B(f):=\frac{1}{\mu(B)}\int_B f(y)\,d\mu(y).$$
Moreover, the minimal constant $C_1$ is equivalent to
$\|f\|_{\rblo(\mu)}$.
\end{lemma}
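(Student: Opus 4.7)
The plan is to prove both directions of the equivalence, with the comparison of the constants following as a byproduct of the quantitative estimates.

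For the forward direction, namely that $f \in \rblo(\mu)$ implies both conditions with $C_1 \ls \|f\|_{\rblo(\mu)}$, I would take $\eta = \rho$ in Definition \ref{d2.2} (legitimate by the independence of $\rblo(\mu)$ from $\eta,\rho$). For the first condition of the lemma, fix a $(\rho,\,\bz_\rho)$-doubling ball $B$; since $B \subset \wz B^\rho$ forces $\einf_{\wz B^\rho} f \le \einf_B f$, I obtain
$$\frac{1}{\mu(B)}\int_B \lf[f - \einf_B f\r] d\mu \le \frac{1}{\mu(B)}\int_B \lf[f - \einf_{\wz B^\rho} f\r] d\mu \le \frac{\mu(\rho B)}{\mu(B)}\|f\|_{\rblo(\mu)} \le \bz_\rho\, \|f\|_{\rblo(\mu)}.$$
For the second condition, given doubling $B \subset S$, the trivial bound $m_S(f) \ge \einf_S f$ gives the splitting
$$m_B(f) - m_S(f) \le [m_B(f) - \einf_B f] + [\einf_B f - \einf_S f],$$
with the first bracket controlled by the condition just established and the second by Definition \ref{d2.2}.

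For the reverse direction, namely that the two conditions imply $f \in \rblo(\mu)$ with $\|f\|_{\rblo(\mu)} \ls C_1$, I would verify each condition of Definition \ref{d2.2}. Its second condition, for doubling $B \subset S$, follows from the dual splitting
$$\einf_B f - \einf_S f \le m_B(f) - \einf_S f = [m_B(f) - m_S(f)] + [m_S(f) - \einf_S f],$$
each bracket directly controlled by the lemma's hypotheses. The first condition of Definition \ref{d2.2}, which must hold for \emph{all} balls $B$, is the main obstacle since the lemma provides information only about doubling balls. Since $B \subset \wz B^\rho$ and $\wz B^\rho$ is doubling, one obtains the raw bound
$$\int_B \lf[f - \einf_{\wz B^\rho} f\r] d\mu \le \int_{\wz B^\rho} \lf[f - \einf_{\wz B^\rho} f\r] d\mu \le C_1\, \mu(\wz B^\rho),$$
reducing the matter to a comparison of $\mu(\wz B^\rho)$ with $\mu(\eta B)$ in the non-doubling case. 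The hardest step is precisely this measure comparison: writing $\wz B^\rho = \rho^{j_0} B$, minimality of $j_0$ forces $\mu(\rho^{j+1} B) > \bz_\rho\, \mu(\rho^j B)$ for $1 \le j < j_0$, while the upper doubling condition yields $\mu(\rho^{j_0} B) \le \lz(c_B, \rho^{j_0} r_B) \le C_\lz^{j_0 \log_2 \rho}\, \lz(c_B, r_B)$. Since $\bz_\rho > C_\lz^{\log_2 \rho}$ by the very definition of $\bz_\rho$ in \eqref{e2.1}, juxtaposing these competing constraints controls $j_0$ and yields the required comparison $\mu(\wz B^\rho) \ls \mu(\eta B)$ for a suitably chosen $\eta$; independence of $\rblo(\mu)$ from the choice of $\eta$ then makes this choice harmless and closes the argument.
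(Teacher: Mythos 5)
The paper does not prove this lemma itself (it is quoted from \cite{ly11}), so the only question is whether your argument stands on its own. The forward direction and the verification of the second condition of Definition \ref{d2.2} in the converse are fine. The gap is in your final step: the measure comparison $\mu(\wz B^{\rho})\ls\mu(\eta B)$ is simply false for non-doubling $\mu$, and your ``competing constraints'' argument does not rescue it. Minimality gives $\mu(\rho^{j_0}B)>\bz_\rho^{\,j_0-1}\mu(\rho B)$ and upper doubling gives $\mu(\rho^{j_0}B)\le C_\lz^{\,j_0\log_2\rho+1}\lz(c_B,r_B)$, so what you actually obtain is
$$
\lf(\bz_\rho C_\lz^{-\log_2\rho}\r)^{j_0}\ls\frac{\lz(c_B,r_B)}{\mu(\rho B)},
$$
and the ratio $\lz(c_B,r_B)/\mu(\rho B)$ is \emph{unbounded} over balls $B$ (only the inequality $\mu(B)\le\lz(c_B,r_B)$ is assumed, with no reverse control). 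Hence $j_0$ is not uniformly bounded, $\mu(\wz B^{\rho})/\mu(\eta B)$ can be arbitrarily large, and the raw bound $\int_B[f-\einf_{\wz B^{\rho}}f]\,d\mu\le C_1\mu(\wz B^{\rho})$ does not yield the first condition of Definition \ref{d2.2}. Handling exactly this phenomenon is the whole point of the ``regularized'' spaces.

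The missing idea is to route the converse through $\rbmo(\mu)$. Your two hypotheses give, for $(\rho,\bz_\rho)$-doubling $B\subset S$, not only $m_B(f)-m_S(f)\le C_1[1+\dz(B,S)]$ but also $m_S(f)-m_B(f)\le m_S(f)-\einf_Bf\le m_S(f)-\einf_Sf\le C_1$, and $\int_B|f-m_B(f)|\,d\mu\le 2\int_B[f-\einf_Bf]\,d\mu\le 2C_1\mu(B)$; by the known characterization of $\rbmo(\mu)$ via doubling balls (the analogue of \cite[Lemma 4.6]{h10}), this gives $f\in\rbmo(\mu)$ with $\|f\|_{\rbmo(\mu)}\ls C_1$. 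Then for an \emph{arbitrary} ball $B$ one splits
$$
f-\einf_{\wz B^{\rho}}f=\lf[f-m_{\wz B^{\rho}}(f)\r]+\lf[m_{\wz B^{\rho}}(f)-\einf_{\wz B^{\rho}}f\r],
$$
where the integral of the first term over $B$ is $\ls\|f\|_{\rbmo(\mu)}\mu(\eta B)$ by the $\rbmo$ condition for general balls, and the second term is a constant $\le C_1$ by your first hypothesis applied to the doubling ball $\wz B^{\rho}$. That closes the argument; without invoking the $\rbmo$ machinery (or reproving its general-ball estimate), your proof of the converse does not.
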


To prove Theorem \ref{t1.1}, we also need the following two
interpolation results.

\begin{lemma}\label{l2.4}
Let $p\in(1,\,\fz)$ and $T$ be a sublinear operator bounded from
$L^1(\mu)$ to $L^{1,\,\fz}(\mu)$. If there exists a positive
constant $C$ such that, for all $(p,\,1)_{\lz}-$ atomic blocks $b$,
\begin{equation*}
\|Tb\|_{L^1(\mu)}\le C|b|_{H^{1,\,p}_{\rm atb}(\mu)},
\end{equation*}
then $T$ extends to be a bounded sublinear operator from $H^1(\mu)$
to $L^1(\mu)$.
\end{lemma}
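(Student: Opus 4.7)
The plan is to fix $f\in H^1(\mu)$ together with an almost-optimal decomposition $f=\sum_{j=1}^\fz b_j$ into $(p,\,1)_\lz$-atomic blocks with $\sum_j|b_j|_{H^{1,\,p}_{\rm atb}(\mu)}\le 2\|f\|_{H^1(\mu)}$, and to establish the pointwise control
$$|Tf(x)|\le\sum_{j=1}^\fz|Tb_j(x)|\quad\text{for $\mu$-a.e. }x\in\cx.$$
Once this is in hand, the hypothesis on atomic blocks combined with monotone convergence gives
$$\|Tf\|_{L^1(\mu)}\le\sum_{j=1}^\fz\|Tb_j\|_{L^1(\mu)}\le C\sum_{j=1}^\fz|b_j|_{H^{1,\,p}_{\rm atb}(\mu)}\le 2C\|f\|_{H^1(\mu)},$$
so that $T$ extends boundedly from $H^1(\mu)$ to $L^1(\mu)$.

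To justify the pointwise bound, I first observe that $H^1(\mu)\hookrightarrow L^1(\mu)$ continuously: for any $(p,\,1)_\lz$-atomic block $b=\kz_1 a_1+\kz_2 a_2$ with $a_j$ supported in $B_j\subset B$, H\"older's inequality together with the size estimate on $a_j$ yields
$$\|a_j\|_{L^1(\mu)}\le\|a_j\|_{L^p(\mu)}[\mu(B_j)]^{1/p'}\le[1+\dz(B_j,\,B)]^{-1}\le 1,$$
whence $\|b\|_{L^1(\mu)}\le|b|_{H^{1,\,p}_{\rm atb}(\mu)}$. In particular, the partial sums $f_N:=\sum_{j=1}^N b_j$ converge to $f$ in $L^1(\mu)$, and the assumed weak-$(1,\,1)$ boundedness then forces $T(f-f_N)\to 0$ in $L^{1,\,\fz}(\mu)$. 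Convergence in $L^{1,\,\fz}(\mu)$ implies convergence in $\mu$-measure, and a standard Borel--Cantelli extraction supplies a subsequence $\{N_k\}_k$ along which $T(f-f_{N_k})\to 0$ $\mu$-a.e. Combining the sublinearity estimates $|Tf|\le|Tf_{N_k}|+|T(f-f_{N_k})|$ and $|Tf_{N_k}|\le\sum_{j=1}^{N_k}|Tb_j|$ (both valid $\mu$-a.e.) and letting $k\to\fz$ deliver the desired pointwise domination.

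The only genuine obstacle is precisely this pointwise bound; as emphasized in Remark \ref{r1.3}(iii), uniform boundedness of a sublinear operator on atomic blocks need not, by itself, extend to a bound on all of $H^1(\mu)$, since the inequality $|T(\sum_j b_j)|\le\sum_j|Tb_j|$ is transparent only for \emph{finite} sums. The role of the weak-$(1,\,1)$ hypothesis here is exactly to supply the continuity of $T$ under $L^1(\mu)$-convergence that is needed to upgrade this finite sublinearity inequality to its infinite version, after which the final $L^1(\mu)$-estimate is purely bookkeeping.
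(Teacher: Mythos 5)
Your argument is correct and is essentially the proof the paper intends: the paper omits the details and refers to \cite[Theorem 1.13]{yy11}, whose proof runs exactly along your lines --- the embedding $H^1(\mu)\hookrightarrow L^1(\mu)$ via H\"older and the size condition on the $a_j$, the weak-$(1,1)$ hypothesis to upgrade $L^1(\mu)$-convergence of the partial sums to $\mu$-a.e.\ convergence of $T(f-f_{N_k})$ along a subsequence, and then the pointwise domination $|Tf|\le\sum_j|Tb_j|$ together with monotone convergence. The only detail worth a passing remark is the case $\mu(\cx)<\fz$, where the constant block $[\mu(\cx)]^{-1}$ of Remark \ref{r1.3}(2) may appear in the decomposition and must likewise be covered by the hypothesis on atomic blocks, but this is harmless.
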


The proof of Lemma \ref{l2.4} is similar to that of \cite[Theorem
1.13]{yy11}, the details being omitted. The following lemma is just
\cite[Theorem 1.1]{ly12}.

\begin{lemma}\label{l2.5}
Suppose that $T$ is a sublinear operator bounded from $L^\fz(\mu)$
to $\rbmo(\mu)$ and from $H^1(\mu)$ to $L^{1,\,\fz}(\mu)$. Then $T$
extends boundedly to $L^p(\mu)$ for every $p\in(1,\,\fz)$.
\end{lemma}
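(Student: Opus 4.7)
The plan is to adapt the sharp maximal function method of \cite{ly12} to combine the two endpoint hypotheses into an $L^p$ bound. Concretely, I would prove the chain
\begin{equation*}
\|Tf\|_{L^p(\mu)} \ls \|M^\sharp_r(Tf)\|_{L^p(\mu)} \ls \|N_r f\|_{L^p(\mu)} \ls \|f\|_{L^p(\mu)}
\end{equation*}
for some $r \in (1,\,p)$, where $M^\sharp_r$ denotes a sharp maximal operator and $N_r$ the non-centered doubling Hardy--Littlewood maximal function in the sense of Lemmas \ref{l3.1}--\ref{l3.2} below. The first inequality is a Fefferman--Stein type bound that holds under an a priori finiteness hypothesis on $Tf$; the third is the standard $L^p$ boundedness of $N_r$ for $r<p$.

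To ensure that a priori condition, I would first establish the result on the dense subspace $L^\fz_b(\mu) \cap L^p(\mu)$, for which $Tf$ automatically possesses the integrability needed to invoke the Fefferman--Stein step, and then extend to all of $L^p(\mu)$ by a density argument that uses the assumed $H^1(\mu) \to L^{1,\,\fz}(\mu)$ bound to control tails. The core task is the pointwise estimate
\begin{equation*}
M^\sharp_r(Tf)(x) \ls N_r f(x).
\end{equation*}
To prove it, fix $x \in \cx$ and a ball $B \ni x$, split $f = f\chi_{B^*} + f\chi_{\cx \setminus B^*} =: f_1 + f_2$ for a suitable dilate $B^*$, and apply the Calder\'on--Zygmund decomposition of Lemma \ref{l2.2} to $f_1$ at level $\ell \sim N_r f(x)$, writing $f_1 = g + h$ with $\|g\|_{L^\fz(\mu)} \ls N_r f(x)$ and $h \in H^1(\mu)$ of controlled norm. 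The mean oscillation of $Tf$ over $B$ is then bounded termwise by combining: (i) the assumed $H^1 \to L^{1,\,\fz}(\mu)$ bound applied to $Th$ via a Kolmogorov type truncation; (ii) the assumed $L^\fz(\mu) \to \rbmo(\mu)$ bound applied to $Tg$, whose $\rbmo$ seminorm feeds directly into the definition of $M^\sharp_r$; and (iii) a comparable oscillation control for $Tf_2$.

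The principal obstacle is step (iii): because the hypotheses on $T$ supply no kernel information, the far-field contribution $Tf_2$ must be controlled purely from the two endpoint bounds. The device is to express the oscillation of $Tf_2$ across dyadic enlargements of $B$ as a telescoping difference of means, apply the $\rbmo$ estimate to each piece, and then sum the resulting $\dz$-coefficients by means of Lemma \ref{l2.1} together with the upper doubling property to produce the required geometric decay. This careful bookkeeping of $\dz(B,\,S)$-terms replaces the classical annular kernel decay estimates used in the Euclidean non-doubling setting, and it is precisely where the fine properties of $M^\sharp_r$ and $N_r$ recorded in Lemmas \ref{l3.1}--\ref{l3.2} become essential.
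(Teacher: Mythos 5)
The paper does not actually prove this lemma; it is quoted verbatim as \cite[Theorem 1.1]{ly12}, so the relevant comparison is with the argument of \cite{ly12}, which the present paper reproduces in spirit when showing that (ii) or (iv) of Theorem \ref{t1.1} implies (iii). Your toolbox (sharp maximal function, Calder\'on--Zygmund decomposition, Lemmas \ref{l3.1} and \ref{l3.2}) is the right one, but the architecture you build with it breaks at exactly the point you flag as the principal obstacle, namely step (iii). For a sublinear $T$ with no kernel, the far-field term $Tf_2=T(f\chi_{\cx\setminus B^*})$ cannot be controlled on $B$. Your proposed device fails for two reasons: first, for general $f\in L^p(\mu)$ the annular pieces $f\chi_{2^{k+1}B^*\setminus 2^kB^*}$ need not be bounded, so the $L^\fz(\mu)\to\rbmo(\mu)$ hypothesis does not even apply to them; second, even for $f\in L^\fz_b(\mu)$ that hypothesis gives each piece an oscillation of order $\|f\|_{L^\fz(\mu)}$ with no decay in the annulus index (the coefficients $\dz(B,\,S)$ of Lemma \ref{l2.1} grow rather than decay), so the telescoping sum diverges and certainly is not dominated by $N_rf(x)$. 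Consequently the middle link of your chain, the pointwise bound $M^\sharp_r(Tf)\ls N_rf$, is not provable from the two endpoint hypotheses; it is precisely the estimate whose classical proof consumes the H\"ormander condition \eqref{e1.6}. (A smaller issue: $M^\sharp_r$, $N_r$ and Lemmas \ref{l3.1}--\ref{l3.2} are set up for $r\in(0,\,1)$, not $r\in(1,\,p)$.)

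The proof in \cite{ly12} avoids any near/far splitting and any pointwise sharp-function estimate. For each height $\ell\in(0,\,\fz)$ one applies the global Calder\'on--Zygmund decomposition of Lemma \ref{l2.2} to $f$ itself, writing $f=g+h$ with $\|g\|_{L^\fz(\mu)}\ls\ell$ and $\|h\|_{H^1(\mu)}\ls\ell^{1-p}\|f\|^p_{L^p(\mu)}$. The hypothesis $T:\ L^\fz(\mu)\to\rbmo(\mu)$ yields $\|M^\sharp_r(Tg)\|_{L^\fz(\mu)}\ls\ell$, so the distribution function of $M^\sharp_r(Tg)$ vanishes above $c_0\ell$ and Lemma \ref{l3.1} transfers this to $N_r(Tg)$; the hypothesis $T:\ H^1(\mu)\to L^{1,\,\fz}(\mu)$ together with Lemma \ref{l3.2} and a Kolmogorov-type computation controls $\ell^p\mu(\{x\in\cx:\ N_r(Th)(x)>\ell\})$ by $\|f\|^p_{L^p(\mu)}$. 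Summing gives the weak $(p,\,p)$ estimate on a dense class (with the usual modification when $\mu(\cx)<\fz$), and the Marcinkiewicz interpolation theorem finishes. This is exactly the scheme carried out for $\cm$ in Section \ref{s3}; if you rework your argument to decompose $f$ globally at each height and aim for distributional rather than pointwise estimates, the far-field term never arises.
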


Based on the above lemmas, we now turn to the proof of Theorem
\ref{t2.1}.

\begin{proof}[Proof of Theorem \ref{t2.1}]
We first show (i). Let $f\in L^1(\mu)$ and $\ell\in(0,\,\fz)$. To prove (i),
it suffices to show that
\begin{equation}\label{e2.3}
\mu\lf(\lf\{x\in\cx:\
\cm(f)(x)>\ell\r\}\r)\ls\ell^{-1}\|f\|_{L^1(\mu)}.
\end{equation}
By applying Lemma \ref{l2.2} and its notation, we see that $f=g+h$, where
$h:=\sum_j(f\oz_j-\fai_j)=:\sum_jh_j$. Obviously,
$\|g\|_{L^\fz(\mu)}\ls\ell$ and $\|g\|_{L^1(\mu)}\ls
\|f\|_{L^1(\mu)}$. This, together with the $L^{p_0}(\mu)$
boundedness of $\cm$, implies that
\begin{equation*}
\mu(\{x\in\cx:\ \cm(g)(x)>\ell\})\ls
\ell^{-p_0}\|g\|^{p_0}_{L^{p_0}(\mu)}\ls \ell^{-1}\|f\|_{L^1(\mu)}.
\end{equation*}
By Lemma \ref{l2.2}(i), to prove \eqref{e2.3}, we only need to prove
that
\begin{equation}\label{e2.4}
\mu\lf(\lf\{x\in\cx\setminus\lf(\bigcup_j6^2B_j\r):\ \cm(h)(x)>\ell\r\}\r)\ls
\ell^{-1}\int_\cx|f(x)|\,d\mu(x).
\end{equation}

To this end, for each fixed
$j$, let $S_j$ be as in Lemma \ref{l2.2}(iii)
with $c_{S_j}$ and $r_{S_j}$ being, respectively, its center and radius, and write
\begin{align*}
&\int_{\cx\setminus 2S_j}\cm(h_j)(x)\,d\mu(x)\\
&\hs\le \int_{\cx\setminus 2S_j}\lf[\int_0^{d(x,\,c_{S_j})+r_{S_j}}
\lf|\int_{d(x,\,y)< t}K(x,\,y)h_j(y)\,d\mu(y)\r|^2\,\frac{dt}{t^3}\r]^{1/2}\,d\mu(x)\\
&\hs\hs+\int_{\cx\setminus
2S_j}\lf[\int_{d(x,\,c_{S_j})+r_{S_j}}^{\fz}
\cdots\r]^{1/2}\,d\mu(x)=:{\rm I_1+I_2},
\end{align*}
From \eqref{e1.3} and \eqref{e1.4}, we deduce that, for any ball
$B$ with the center $c_B$, $x\not\in kB$ with $k\in(1,\,\fz)$ and $y\in B$,
\begin{equation}\label{e2.5}
\lz(c_B,\,d(x,\,c_B))\sim \lz(x,\,d(x,\,c_B))\sim\lz(x,\,d(x,\,y)),
\end{equation}
which, together with the Minkowski inequality, \eqref{e1.3} and
\eqref{e1.5}, shows that
\begin{align*}
{\rm I_1} &\ls\int_{\cx\setminus
2S_j}\int_{\cx}\lf[\int_{d(x,\,y)}^{d(x,\,c_{S_j})+r_{S_j}}\,\frac{dt}{t^3}\r]^{1/2}
\frac{|h_j(y)|d(x,\,y)}{\lz(x,\,d(x,\,y))}\,d\mu(y)\,d\mu(x)\\
&\ls r_{S_j}^{1/2}\lf[\int_{\cx}|h_j(y)|\,d\mu(y)\r]
\int_{\cx\setminus
2S_j}\frac{1}{[d(x,\,c_{S_j})]^{1/2}\lz(x,\,d(x,\,c_{S_j}))}\,d\mu(x)
\ls\|h_j\|_{L^1(\mu)}.
\end{align*}
For $x\in\cx\setminus 2S_j$ and $y\in S_j$, it holds true that $d(x,\,y)<
d(x,\,c_{S_j})+r_{S_j}$. Thus, by the vanishing moment of $h_j$ and
\eqref{e1.6}, we obtain
\begin{align*}
{\rm I_2}&\ls\int_{\cx\setminus
2S_j}\lf|\int_{\cx}[K(x,\,y)-K(x,c_{S_j})]h_j(y)\,d\mu(y)\r|
\frac{1}{d(x,\,c_{S_j})+r_{S_j}}\,d\mu(x)\\
&\ls\int_{\cx}|h_j(y)|\int_{\cx\setminus
2S_j}|K(x,\,y)-K(x,c_{S_j})|
\frac{1}{d(x,\,c_{S_j})}\,d\mu(x)\,d\mu(y) \ls\|h_j\|_{L^1(\mu)}.
\end{align*}

Notice that $\supp(f\oz_j)\subset 6B_j$ and $|\oz_j|\le 1$. From
this, the Minkowski inequality, \eqref{e1.5}, \eqref{e2.5} and
Lemma \ref{l2.1}, it follows that
\begin{align}\label{e2.6}
&\int_{(2S_j)\setminus6^2B_j}\cm(f\oz_j)(x)\,d\mu(x)\nonumber\\
&\hs\ls\int_{(2S_j)\setminus6^2B_j}\int_{\cx}
\lf[\int_{d(x,\,y)}^{\fz}\frac{dt}{t^3}\r]^{1/2}
\frac{|f(y)\oz_j(y)|d(x,\,y)}{\lz(x,\,d(x,\,y))}\,d\mu(y)d\mu(x)\nonumber\\
&\hs\ls\int_{(2S_j)\setminus6^2B_j}\frac{1}{\lz(c_{B_j},\,d(x,\,c_{B_j}))}\,d\mu(x)
\int_{6B_j}|f(y)|\,d\mu(y)\nonumber\\
&\hs\ls\dz(B_j,S_j)\int_{6B_j}|f(y)|\,d\mu(y)
\ls\int_{6B_j}|f(y)|\,d\mu(y).
\end{align}
On the other hand,  by the H\"older inequality, the
$L^{p_0}(\mu)$-boundedness of $\cm$ and Lemma \ref{l2.2}(ii), we
conclude that
\begin{align*}
\int_{2S_j}\cm(\fai_j)(x)\,d\mu(x)
&\le\lf\{\int_{2S_j}[\cm(\fai_j)(x)]^{p_0}\,d\mu(x)\r\}^{1/p_0}
\lf[\mu(2S_j)\r]^{1/p'_0}\\
&\ls\lf[\int_{S_j}|\fai_j(x)|^{p_0}\,d\mu(x)\r]^{1/p_0}\lf[\mu(S_j)\r]^{1/p'_0}
\ls\int_{6B_j}|f(y)|\,d\mu(y),
\end{align*}
where $1/p_0+1/p'_0=1$. The above two estimates, together with the estimates for ${\rm I}_1$
and ${\rm I}_2$ and Lemma \ref{l2.2}, show that
\begin{align*}
&\mu\lf(\lf\{x\in\cx\setminus\lf(\bigcup_j6^2B_j\r):\ \cm(h)(x)>\ell\r\}\r)\\
&\hs\le \ell^{-1}\lf[\sum_j\int_{\cx\setminus
2S_j}\cm(h_j)(x)\,d\mu(x)+ \sum_j\int_{(2S_j)\setminus
6^2B_j}\cdots\r]\\
&\hs\ls\ell^{-1}\lf[\sum_j\|h_j\|_{L^1(\mu)}+\sum_j\int_{6B_j}|f(y)|\,d\mu(y)\r]
\ls \ell^{-1}\int_\cx|f(x)|\,d\mu(x),
\end{align*}
which implies \eqref{e2.4} and hence completes the proof of (i).

To prove (ii), as pointed out in Remark \ref{r1.3}, since the definition of
$H^1(\mu)$ is independent of the choice of the constant $\rho\in
(1,\,\fz)$, without loss of generality, we may assume that $\rho=2$ in Definition \ref{d1.4}. It
follows, from (i), that $\cm$ is bounded from $L^1(\mu)$ to
$L^{1,\fz}(\mu)$. Thus, by Lemma \ref{l2.4}, to show (ii), it suffices to
prove that, for all $(p_0,\,1)_\lz$-atomic blocks $b$,
\begin{equation}\label{e2.7}
\|\cm(b)\|_{L^1(\mu)}\ls|b|_{H^{1,\,p_0}_{\rm atb}(\mu)}.
\end{equation}
Let $b:=\sum^2_{j=1}\kz_j a_j$ be a $(p_0,\,1)_\lz$-atomic block, where,
for any $j\in\{1,\,2\}$, $\supp(a_j)\subset B_j\subset B$ for some
$B_j$ and $B$ as in Definition \ref{d1.4}. Write
\begin{align*}
&\int_{\cx}\cm(b)(x)\,d\mu(x)\\
&\hs=\int_{\cx\setminus
2B}\cm(b)(x)\,d\mu(x)+\int_{2B}\cdots\\
&\hs\le\int_{\cx\setminus
2B}\cm(b)(x)\,d\mu(x)+\sum_{j=1}^2|\kz_j|\lf[\int_{2B_j}\cm(a_j)(x)\,d\mu(x)
+\int_{(2B)\setminus2B_j}\cdots\r]\\
&\hs=:{\rm
J_1 + J_2}.
\end{align*}

By Definition \ref{d1.4} and an argument similar to that used in the estimates
for ${\rm I}_1$ and ${\rm I}_2$, we see that
\begin{equation}\label{e2.8}
{\rm J_1}\ls \|b\|_{L^1(\mu)}\ls|b|_{H^{1,\,p_0}_{\rm atb}(\mu)}.
\end{equation}

From the H\"older inequality, the $L^{p_0}(\mu)$ boundedness of
$\cm$ and Definition \ref{d1.4}(iii), it follows that, for each fixed
$j$,
\begin{align}\label{e2.9}
\int_{2B_j}\cm(a_j)(x)\,d\mu(x)&\le\|\cm(a_j)\|_{L^{p_0}(\mu)}
[\mu(2B_j)]^{1/p'_0}\nonumber\\
&\ls\|a_j\|_{L^{p_0}(\mu)}[\mu(2B_j)]^{1/p'_0}\ls 1.
\end{align}
Similar to the estimate for \eqref{e2.6}, by Definition
\ref{d1.4}(iii), we have
\begin{align*}
\int_{(2S)\setminus2B_j}\cm(a_j)(x)\,d\mu(x)
&\ls\int_{(2S)\setminus2B_j}\frac{1}{\lz(c_{B_j},\,d(x,\,c_{B_j}))}\,d\mu(x)\|a_j\|_{L^1(\mu)}\\
&\ls\dz(B_j,\,S)\|a_j\|_{L^1(\mu)}\ls 1.
\end{align*}
Combining the above estimates, we see that
$$
{\rm J_2}\ls|\kz_1|+|\kz_2|\sim |b|_{H^{1,\,p_0}_{\rm atb}(\mu)},
$$
which, together with \eqref{e2.8}, implies \eqref{e2.7} and hence
completes the proof of (ii).

We now prove (iii). First, we claim that there exists a positive
constant $C$ such that, for any $f\in L^\fz(\mu)$ and
$(6,\,\bz_6)$-doubling ball $B$,
\begin{equation}\label{e2.10}
\frac{1}{\mu(B)}\int_B\cm(f)(y)\,d\mu(y)\le
C\|f\|_{L^\fz(\mu)}+\inf_{y\in B}\cm(f)(y).
\end{equation}
To prove this, we decompose $f$ as
$$
f(x)=f\chi_{5B}+f\chi_{\cx\setminus 5B}=: f_1+f_2.
$$
By the H\"older inequality and $L^{p_0}(\mu)$ boundedness of $\cm$,
we have
\begin{align}\label{e2.11}
\frac{1}{\mu(B)}\int_B\cm(f_1)(y)\,d\mu(y)
&\le\frac{1}{[\mu(B)]^{1/p_0}}
\lf\{\int_{\cx}[\cm(f_1)(y)]^{p_0}\,d\mu(y)\r\}^{1/p_0}\nonumber\\
&\ls\frac{[\mu(5B)]^{1/p_0}}{[\mu(B)]^{1/p_0}}\|f\|_{L^\fz(\mu)}\ls\|f\|_{L^\fz(\mu)}.
\end{align}
Noticing that, for $y\in B$ and $z\in\cx\setminus 5B$, it holds true that $d(y,z)> r_B$.
By the Minkowski inequality, \eqref{e1.3} and \eqref{e1.5}, we conclude
that, for any $y\in B$,
\begin{align}\label{e2.12}
\cm(f_2)(y) &=\lf[\int_{r_B}^\fz\lf|\int_{d(y,\,z)<
t}K(y,\,z)f_2(z)\,d\mu(z)\r|^{2}
\frac{dt}{t^3}\r]^{1/2}\nonumber\\
&\le\lf[\int_{r_B}^\fz\lf|\int_{d(y,\,z)<
t}K(y,\,z)f(z)\,d\mu(z)\r|^{2}
\frac{dt}{t^3}\r]^{1/2}\nonumber\\
&\hs+\lf[\int_{r_B}^\fz\lf|\int_{d(y,\,z)<
t}K(y,\,z)f_1(z)\,d\mu(z)\r|^{2}
\frac{dt}{t^3}\r]^{1/2}\nonumber\\
&\le\cm(f)(y)+\lf[\int_{r_B}^\fz\lf|\int_{d(y,\,z)<
6r_B}K(y,\,z)f_1(z)\,d\mu(z)\r|^{2}
\frac{dt}{t^3}\r]^{1/2}\nonumber\\
&\le \cm(f)(y)+C\|f\|_{L^\fz(\mu)}r_B^{-1}\int_{d(y,\,z)< 6r_B}
\frac{d(y,\,z)}{\lz(y,\,d(y,\,z))}\,d\mu(z)\nonumber\\
&\le \cm(f)(y)+C\|f\|_{L^\fz(\mu)},
\end{align}
where $C$ is a positive constant independent of $f$ and $y$. Thus,
the proof of the estimate \eqref{e2.10} can be reduced to proving
that, for all $x,\,y\in B$,
\begin{equation}\label{e2.13}
|\cm(f_2)(x)-\cm(f_2)(y)|\ls\|f\|_{L^\fz(\mu)}.
\end{equation}
To this end, write
\begin{align*}
&|\cm(f_2)(x)-\cm(f_2)(y)|\\
&\hs\le\lf[\int_0^\fz\lf|\int_{d(x,\,z)<
t}K(x,\,z)f_2(z)\,d\mu(z)
-\int_{d(y,\,z)< t}K(y,\,z)f_2(z)\,d\mu(z)\r|^2\,\frac{dt}{t^3}\r]^{1/2}\\
&\hs\le\lf\{\int_0^\fz\lf[\int_{d(y,\,z)< t\le d(x,\,z)}|K(y,\,z)||f_2(z)|\,d\mu(z)\r]^2\,\frac{dt}{t^3}\r\}^{1/2}\\
&\hs\hs+\lf\{\int_0^\fz\lf[\int_{d(x,\,z)< t\le d(y,\,z)}|K(x,\,z)||f_2(z)|\,d\mu(z)\r]^2\,\frac{dt}{t^3}\r\}^{1/2}\\
&\hs\hs+\lf\{\int_0^\fz\lf[\int_{\max\{d(y,\,z),\,d(x,\,z)\}< t}
|K(y,\,z)-K(x,\,z)||f_2(z)|\,d\mu(z)\r]^2\,\frac{dt}{t^3}\r\}^{1/2}\\
&\hs=:{\rm M}_1+{\rm M}_2+{\rm M}_3.
\end{align*}
Applying the Minkowski inequality, \eqref{e1.3}, \eqref{e1.5} and
\eqref{e2.5}, we conclude that, for all $x,\,y\in B$,
\begin{align*}
{\rm M}_1 &\ls\int_{\cx}\frac{|f_2(z)|d(y,\,z)}{\lz(y,\,d(y,\,z))}
\lf[\int_{d(y,\,z)< t\le d(x,\,z)}\frac{dt}{t^3}\r]^{1/2}\,d\mu(z)\\
&\ls\|f\|_{L^\fz(\mu)}\int_{\cx\setminus 5B}
\frac{{r_B}^{1/2}}{[d(z,\,c_B)]^{1/2}\lz({c_B,\,d(z,\,c_B))}}\,d\mu(z)
\ls\|f\|_{L^\fz(\mu)}.
\end{align*}
Similarly, ${\rm M}_2\ls \|f\|_{L^\fz(\mu)}$. Another application of
the Minkowski inequality and \eqref{e1.6} shows that
\begin{align*}
{\rm M}_3 &\ls\int_\cx|K(y,\,z)-K(x,\,z)||f_2(z)|
\lf[\int_{\max\{d(y,\,z),\,d(x,\,z)\}< t}\frac{dt}{t^3}\r]^{1/2}\,d\mu(z)\\
&\ls\|f\|_{L^\fz(\mu)}\int_{\cx\setminus 5B}
\frac{|K(y,\,z)-K(x,\,z)|}{d(z,\,c_B)}\,d\mu(z)
\ls\|f\|_{L^\fz(\mu)}.
\end{align*}
Combining the estimates for ${\rm M}_1$, ${\rm M}_2$ and ${\rm M}_3$, we obtain \eqref{e2.13}.
Thus, \eqref{e2.10} holds true.

By \eqref{e2.10}, for $f\in L^\fz(\mu)$, if $\cm(f)(x_0)<\fz$ for
some point $x_0\in \cx$, then $\cm(f)$ is finite $\mu$-almost
everywhere and, in this case,
$$
\frac{1}{\mu(B)}\int_B\lf[\cm(f)(x)-\mathop\einf_{x\in
B}\cm(f)(x)\r]\,d\mu(x)\ls\|f\|_{L^\fz(\mu)},
$$
provided that $B$ is a $(6,\,\bz_6)$-doubling ball. To prove that
$\cm(f)\in \rblo(\mu)$, by Lemma \ref{l2.3}, we still need to prove
that $\cm(f)$ satisfies \eqref{e2.2}. Let $B\subset S$ be any two
$(6,\,\bz_6)$-doubling balls. For any $x\in B$ and $y\in S$, we write
\begin{align*}
\cm(f)(x)
&\ls\cm(f\chi_{5B})(x)+\cm(f\chi_{(5S)\setminus 5B})(x)\\
&\hs+\lf[\cm(f\chi_{\cx\setminus 5S})(x)-\cm(f\chi_{\cx\setminus
5S})(y)\r]+\cm(f\chi_{\cx\setminus 5S})(y).
\end{align*}
By an estimate similar to that of \eqref{e2.12}, for all $y\in S$,
we have
$$
\cm(f\chi_{\cx\setminus 5S})(y)\le \cm(f)(y)+C\|f\|_{L^\fz(\mu)},
$$
where $C$ is a positive constant independent of $f$ and $y$. On the
other hand, by the estimate same as that of \eqref{e2.13}, for all
$x,\,y\in S$, we see that
$$
|\cm(f\chi_{\cx\setminus 5S})(x)-\cm(f\chi_{\cx\setminus 5S})(y)|\ls
\|f\|_{L^\fz(\mu)}.
$$
For all $x\in B$, by the Minkowski inequality, \eqref{e1.5},
\eqref{e2.5} and Lemma \ref{l2.1}, we obtain
\begin{align}\label{e2.14}
\cm(f\chi_{(5S)\setminus 5B})(x)
&\ls\int_{\cx}\lf[\int_{d(x,\,y)}^{\fz}\frac{dt}{t^3}\r]^{1/2}
\frac{|f\chi_{(5S)\setminus 5B}(y)|d(x,\,y)}{\lz(x,\,d(x,\,y))}\,d\mu(y)\nonumber\\
&\ls\|f\|_{L^{\fz}(\mu)}\int_{(5S)\setminus B}
\frac{1}{\lz(c_B,\,d(z,\,c_B))}\,d\mu(z)\nonumber\\
&\ls[1+\dz(B,\,S)]\|f\|_{L^{\fz}(\mu)}.
\end{align}
Therefore, for any $x\in B$ and $y\in S$, we find that
$$
\cm(f)(x)\ls\cm(f\chi_{5B})(x)+\cm(f)(y)+[1+\dz(B,\,S)]\|f\|_{L^{\fz}(\mu)}.
$$
Taking mean value over $B$ for $x$ and over $S$ for $y$, we conclude that
$$
m_B(\cm(f))-m_S(\cm(f))\ls [1+\dz(B,\,S)]\|f\|_{L^{\fz}(\mu)},
$$
where we used \eqref{e2.11}. This finishes the proof of Theorem \ref{t2.1}(iii).

Notice that $\rblo(\mu)\subset\rbmo(\mu)$. It then follows, from
(ii), (iii) and Lemma \ref{l2.5}, that $\cm$ is bounded on $L^p(\mu)$
for all $p\in(1,\,\fz)$, which implies (iv) and hence completes the
proof of Theorem \ref{t2.1}.
\end{proof}

\section{Proof of Theorem \ref{t1.1}\label{s3}}

To prove Theorem \ref{t1.1}, we need some maximal
functions in \cite{h10,ad10} as follows. Let $f\in L_\loc^1(\mu)$
and $x\in\cx$. The {\it non-centered doubling Hardy-Littlewood
maximal function} $N(f)(x)$ and the {\it sharp maximal function}
$M^\sharp(f)(x)$ are, respectively, defined by setting,
$$
N(f)(x):=\sup_{\gfz{B\ni x}{B\ (6, \,\bz_6)-{\rm
doubling}}}\dfrac{1}{\mu(B)}\int_B|f(y)|\,d\mu(y)
$$
and
\begin{align*}
M^\sharp(f)(x)&:=\sup_{B\ni x}
\frac{1}{\mu(5B)}\int_B|f(y)-m_{\wz B}(f)|\,d\mu(y)\nonumber\\
&\hs+\sup_{\gfz{x\in B\subset S}{B,\, S\ (6,\, \bz_6)-{\rm
doubling}}}\dfrac{|m_B(f)-m_S(f)|}{1+\dz(B,\,S)}.
\end{align*}
Moreover, for all
$r\in(0,\,\fz)$, the operators $M^\sharp_r$ and $N_r$ are defined, respectively, by
setting, for all $f\in L^r_\loc(\mu)$ and $x\in\cx$,
\begin{equation}\label{e3.1}
M^\sharp_r(f)(x):=\{M^\sharp(|f|^r)(x)\}^{\frac{1}{r}}
\end{equation}
and
\begin{equation}\label{e3.2}
N_r(f):= [N(|f|^r)]^{1/r}.
\end{equation}
By the Lebesgue differentiation theorem, we see that,
for $\mu$-almost every $x\in\cx$,
\begin{equation}\label{e3.3}
|f(x)|\le N(f)(x);
\end{equation}
see \cite[Corollary 3.6]{h10}. Moreover, it follows, from
\cite[Proposition 3.5]{h10}, that, for any $p\in[1,\,\fz)$, $Nf$ is
bounded from $L^p(\mu)$ to $L^{p,\,\fz}(\mu)$.

The following two technical lemmas were, respectively, \cite[Lemmas 3.2 and 3.3]{ly12}.

\begin{lemma}\label{l3.1}
Let $p\in[1,\,\fz)$ and $f\in L^1_{\rm loc}(\mu)$ such that
$\int_\cx f(x)\,d\mu(x)=0$ if $\mu(\cx)<\fz$. If, for any $R\in(0,\,\fz)$,
$$
\sup_{\ell\in(0,\,R)}\ell^p\mu(\{x\in\cx:\ N(f)(x)>\ell\})<\fz,
$$
then there exists a positive constant $C$, independent of $f$, such that
\begin{equation*}
\sup_{\ell\in(0,\,\fz)}\ell^p\mu(\{x\in\cx:\ N(f)(x)>\ell\})
\le C\sup_{\ell\in(0,\,\fz)}\ell^p\mu\lf(\lf\{x\in\cx:\ M^\sharp
(f)(x)>\ell\r\}\r).
\end{equation*}
\end{lemma}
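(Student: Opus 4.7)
The plan is to establish a good-$\lambda$ type distributional inequality of the form
\[\mu(\{N(f)>\alpha\ell,\,M^\sharp(f)\le\delta\ell\})\le\eta(\delta)\,\mu(\{N(f)>\ell\})\]
for some fixed $\alpha>1$ and $\delta\in(0,\,1)$ with $\eta(\delta)\to0$ as $\delta\to0$, and then to use the a priori finiteness hypothesis to absorb the leading $N(f)$-term after raising to the $p$-th power and taking the supremum in $\ell$.

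First I would carry out a Whitney-type decomposition of the level set $E_\ell:=\{x\in\cx:\,N(f)(x)>\ell\}$ into a controlled-overlap family of $(6,\,\bz_6)$-doubling balls $\{B_i\}_i$ with $m_{B_i}(|f|)\sim\ell$; the existence of arbitrarily small $(6,\,\bz_6)$-doubling balls centered at $\mu$-almost every point (recalled at the beginning of Section~\ref{s2}) combined with the geometric doubling property (Remark~\ref{r1.2}) makes such a cover available in the nonhomogeneous setting. For each $B_i$ that meets $\{M^\sharp(f)\le\delta\ell\}$, the definition of $M^\sharp$ immediately gives
\[\frac{1}{\mu(5B_i)}\int_{B_i}|f(y)-m_{\wz{B_i}}(f)|\,d\mu(y)\le\delta\ell,\]
and chaining $B_i$ to sub- or super-doubling balls via Lemma~\ref{l2.1}(iv),(v) yields $|m_B(f)-m_{B_i}(f)|\ls\delta\ell$ on the relevant scales. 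A Chebyshev argument inside $B_i$ then produces
\[\mu(\{y\in B_i:\,N(f)(y)>\alpha\ell\})\le\eta(\delta)\,\mu(B_i)\]
with $\eta(\delta)\to0$ as $\delta\to0$, and summing over $i$ gives the good-$\lambda$ inequality. The normalization $\int_\cx f\,d\mu=0$ in the finite-measure case is what permits this chaining to be pushed all the way out without an additive obstruction.

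To conclude I would write
\[\mu(\{N(f)>\alpha\ell\})\le\mu(\{N(f)>\alpha\ell,\,M^\sharp(f)\le\delta\ell\})+\mu(\{M^\sharp(f)>\delta\ell\}),\]
multiply through by $(\alpha\ell)^p$, and take the supremum over $\ell\in(0,\,R/\alpha)$ to reach
\[\sup_{\ell\in(0,\,R)}\ell^p\mu(\{N(f)>\ell\})\le\alpha^p\eta(\delta)\sup_{\ell\in(0,\,R)}\ell^p\mu(\{N(f)>\ell\})+\lf(\frac{\alpha}{\delta}\r)^p\sup_{\ell\in(0,\,\fz)}\ell^p\mu(\{M^\sharp(f)>\ell\}).\]
Here the hypothesis $\sup_{\ell\in(0,\,R)}\ell^p\mu(\{N(f)>\ell\})<\fz$ is crucial: it allows the first summand on the right to be absorbed, once $\delta$ is chosen so small that $\alpha^p\eta(\delta)<1/2$. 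The resulting bound is uniform in $R$, so letting $R\to\fz$ yields the claim.

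The main obstacle is the passage from the pointwise control $M^\sharp(f)\le\delta\ell$ at a single point of $B_i$ to genuine control of averages and oscillations of $|f|$ on all doubling balls relevant to $N(f)(y)$ for $y\in B_i$: the $\dz$-coefficients in the chain-of-balls argument must be kept uniformly bounded using Lemma~\ref{l2.1}, and the two pieces of $M^\sharp$ (the $5B$-oscillation term and the difference-of-averages term) must be coordinated. The Whitney-type decomposition in the nonhomogeneous setting is the other delicate ingredient, but it is by now standard via the existence of small doubling balls and the geometric doubling property.
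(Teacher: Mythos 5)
First, a point of reference: the paper does not prove Lemma \ref{l3.1} at all --- it is quoted verbatim from \cite[Lemma 3.2]{ly12}, which in turn adapts Tolsa's argument from \cite{t01} (see also \cite{ad10}) to the present setting. Your overall strategy --- a good-$\lambda$ inequality $\mu(\{N(f)>\alpha\ell,\ M^\sharp(f)\le\delta\ell\})\le\eta(\delta)\mu(\{N(f)>\ell\})$ followed by multiplication by $(\alpha\ell)^p$, a supremum over $\ell\in(0,\,R/\alpha)$, absorption of the $N(f)$-term using the a priori finiteness hypothesis, and $R\to\fz$ --- is exactly the route taken there, and that concluding portion of your write-up is correct as it stands.

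The gap is inside the good-$\lambda$ inequality itself, at the step you delegate to ``a Chebyshev argument inside $B_i$.'' Two issues. First, the covering: you ask for Whitney-type balls of $\{N(f)>\ell\}$ satisfying $m_{B_i}(|f|)\sim\ell$, which conflates a Whitney decomposition with a Calder\'on--Zygmund stopping time; Whitney balls need not have average $\sim\ell$, and balls with average $\sim\ell$ need not have the Whitney property. What the argument actually needs is the Whitney property --- a bounded dilate of $B_i$ meets $\{N(f)\le\ell\}$ --- because this, combined with Lemma \ref{l2.1}(ii) and the second half of the definition of $M^\sharp$, is what shows that any doubling ball $B\ni y$ with $r_B\gs r_{B_i}$ has $m_B(|f|)\le\ell+C\delta\ell<\alpha\ell$, and hence localizes the supremum defining $N(f)(y)$ to $N\bigl((f-m_{\wz{CB_i}}(f))\chi_{CB_i}\bigr)(y)\gs(\alpha-1-C\delta)\ell$ for $y\in B_i$ with $N(f)(y)>\alpha\ell$. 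Second, once localized, Chebyshev applied to $|f-m_{\wz{CB_i}}(f)|$ on $B_i$ bounds level sets of $f$, not of $N(f)$; to get the factor $\delta$ one must invoke the weak-$(1,1)$ boundedness of $N$ (recalled after \eqref{e3.3} from \cite[Proposition 3.5]{h10}) applied to $(f-m_{\wz{CB_i}}(f))\chi_{CB_i}$, use the $\mu(5B)$-normalization in the first term of $M^\sharp$, and use that the covering balls are doubling so that $\mu(5CB_i)\ls\mu(B_i)$; bounded overlap (from geometric doubling) then sums this to $\eta(\delta)\mu(\{N(f)>\ell\})$. You correctly flag this passage as ``the main obstacle,'' but the mechanism you propose for it does not close it. Finally, the hypothesis $\int_\cx f\,d\mu=0$ when $\mu(\cx)<\fz$ is not merely ``the absence of an additive obstruction'': since $M^\sharp$ annihilates constants, the conclusion is simply false for $f\equiv c\ne0$ on a finite measure space, and in the proof the cancellation is used concretely when the chain of doubling balls terminates at $\cx$ itself, where $m_\cx(f)=0$ replaces the missing control on the global average.
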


\begin{lemma}\label{l3.2}
Let $r\in(0,\,1)$ and $N_r(f)$ be as in \eqref{e3.2}. Then, for any
$p\in[1,\,\fz)$, there exists a positive constant $C$, depending on
$r$, such that, for any suitable function $f$ and $\ell\in(0,\,\fz)$,
$$
\mu(\{x\in\cx:\ N_r(f)(x)>\ell\})\le C\ell^{-p}
\sup_{\tau\in[\ell,\,\fz)}\tau^p\mu(\{x\in\cx:\ |f(x)|>\tau\}).
$$
\end{lemma}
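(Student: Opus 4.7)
The plan is to reduce the claim, via the identity $\{N_r(f)>\ell\}=\{N(|f|^r)>\ell^r\}$, to a weak-type estimate for $N$ applied to $g:=|f|^r$. I would first decompose $g=g_1+g_2$ according to the size of $|f|$, setting
$$g_1:=|f|^r\chi_{\{|f|\le\ell\}},\qquad g_2:=|f|^r\chi_{\{|f|>\ell\}}.$$
Since $\|g_1\|_{L^\fz(\mu)}\le\ell^r$, the trivial bound $N(g_1)\le\|g_1\|_{L^\fz(\mu)}\le\ell^r$ yields
$$\{N(g)>2\ell^r\}\subseteq\{N(g_2)>\ell^r\}.$$

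Next, I would invoke the weak-type $(1,1)$ boundedness of $N$ recorded after \eqref{e3.3} to obtain
$$\mu(\{N(g_2)>\ell^r\})\ls\ell^{-r}\|g_2\|_{L^1(\mu)}=\ell^{-r}\int_{\{|f|>\ell\}}|f|^r\,d\mu.$$
Applying the layer cake formula to $h:=|f|\chi_{\{|f|>\ell\}}$ gives
$$\int_\cx h^r\,d\mu=\ell^r\mu(\{|f|>\ell\})+r\int_\ell^\fz \tau^{r-1}\mu(\{|f|>\tau\})\,d\tau.$$
Setting $M:=\sup_{\tau\ge\ell}\tau^p\mu(\{|f|>\tau\})$, the hypothesis provides $\mu(\{|f|>\tau\})\le M\tau^{-p}$ for all $\tau\ge\ell$. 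Since $p\ge 1>r$, the remaining integral converges and evaluates to $rM\ell^{r-p}/(p-r)$, producing
$$\int_\cx h^r\,d\mu\le\frac{p}{p-r}\,M\ell^{r-p}.$$
Combining these three steps yields $\mu(\{N_r(f)>2^{1/r}\ell\})\ls \frac{p}{p-r}\,M\ell^{-p}$.

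Finally, rescaling $\ell\mapsto\ell/2^{1/r}$ transports the estimate to level $\ell$, at the cost of a factor $2^{p/r}$ depending only on $r$. The principal technical point is that this rescaling naively replaces the supremum over $[\ell,\fz)$ by a supremum over $[\ell/2^{1/r},\fz)$; to recover the tighter interval as stated, I would use the pointwise bound $|f|\le N_r(f)$ from \eqref{e3.3} together with the monotonicity of the distribution function to reabsorb the contribution from $\tau\in[\ell/2^{1/r},\ell]$ into the left-hand side, thereby concluding the proof with a constant $C$ depending on $r$. This absorption step is the main obstacle, and it is what makes the dependence on $r$ unavoidable in the constant.
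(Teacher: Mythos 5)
The paper offers no proof of this lemma (it is quoted from \cite[Lemma 3.3]{ly12}), so I compare your argument with the closely related inline computation \eqref{e3.15}. Your main chain --- splitting $|f|^r$ at height $\ell$, using $N(g_1)\le\ell^r$, the weak $(1,1)$ bound for $N$, and the layer-cake computation together with $\mu(\{|f|>\tau\})\le M\tau^{-p}$ for $\tau\ge\ell$ --- is correct and is exactly the technique the paper uses in \eqref{e3.15}. It yields
$$
\mu(\{x\in\cx:\ N_r(f)(x)>2^{1/r}\ell\})\le C\,\ell^{-p}\sup_{\tau\in[\ell,\,\fz)}\tau^p\mu(\{x\in\cx:\ |f(x)|>\tau\}),
$$
equivalently, after rescaling, the stated inequality with the supremum taken over $[2^{-1/r}\ell,\,\fz)$.

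The gap is the final ``absorption'' step, and it is genuine. For $\tau\in[2^{-1/r}\ell,\,\ell)$ monotonicity and $|f|\le N_r(f)$ only give $\tau^p\mu(\{|f|>\tau\})\le\ell^p\mu(\{N_r(f)>2^{-1/r}\ell\})$, which is the distribution function of $N_r(f)$ at the \emph{lower} level $2^{-1/r}\ell$; this dominates the left-hand side $\mu(\{N_r(f)>\ell\})$ rather than being dominated by it, so it cannot be reabsorbed. Writing $A(\ell):=\mu(\{N_r(f)>\ell\})$, your step produces $A(\ell)\le C\ell^{-p}M(\ell)+C\,A(2^{-1/r}\ell)$ with $C>1$, which diverges on iteration. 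Indeed no absorption can close this: on $(\rr,|\cdot|,dx)$ (where every ball is $(6,\bz_6)$-doubling) let $f=(1-\ez)\ell$ on $[0,K]$ except on an evenly spread set $S$ of measure $\rho K$ where $f=2\ell$; for $\rho$ of order $\ez/(2^r-1)$ one gets $N(|f|^r)>\ell^r$ on essentially all of $[0,K]$, so the left-hand side is about $K$, while $\sup_{\tau\ge\ell}\tau^p\mu(\{|f|>\tau\})\le(2\ell)^p\rho K$, and $\ez\to0$ defeats any fixed constant. So the version your argument actually proves (supremum over $[2^{-1/r}\ell,\,\fz)$, or over $(0,\,\fz)$) is the one you should state; fortunately it is all that this paper ever uses --- in the display preceding \eqref{e3.14} the right-hand side is simply majorized by the $L^{1,\,\fz}(\mu)$ quasi-norm of $\cm(f_2)$, and in \eqref{e3.15} the authors themselves keep the threshold $\ell/2^{1/r}$.
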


\begin{lemma}\label{l3.3}
Let $r\in(0,\,1)$, $K$ satisfy \eqref{e1.5} and \eqref{e1.6}, and
$\cm$ be as in \eqref{e1.7}. If $\cm$ is bounded from $H^1(\mu)$
into $L^1(\mu)$, then there exists a positive constant $C$,
depending on $r$, such that, for any $\rho\in(1,\,\fz)$, ball
$B$ and function $a\in L^\fz(\mu)$ supported on $B$,
\begin{equation}\label{e3.4}
\dfrac{1}{\mu(\rho B)}\dint_B[\cm(a)(x)]^r\,d\mu(x)\le
C\|a\|^r_{L^\fz(\mu)}.
\end{equation}
\end{lemma}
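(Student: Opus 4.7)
My plan is to split $a$ as $a = a_0 + m\chi_B$, where $m := m_B(a)$ and $a_0 := a - m\chi_B$, and to bound the two contributions to $\mu(\rho B)^{-1}\int_B[\cm(a)]^r\,d\mu$ separately. The key tools throughout are the subadditivity $(u+v)^r\le u^r+v^r$ for $u,v\ge 0$ and $r\in(0,1)$, the Kolmogorov-type inequality
$$
\int_E|g|^r\,d\mu\le\frac{1}{1-r}\mu(E)^{1-r}\|g\|_{L^1(\mu)}^r\quad\bigl(\mu(E)<\fz,\,0<r<1\bigr),
$$
and the hypothesis that $\cm\colon H^1(\mu)\to L^1(\mu)$ is bounded.

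For the atomic contribution $\cm(a_0)$: since $a_0$ is supported on $B$, has vanishing integral, and satisfies $\|a_0\|_{L^\fz(\mu)}\le 2\|a\|_{L^\fz(\mu)}$, taking both inner balls equal to $B$ in Definition \ref{d1.4} (which is legitimate, as $\dz(B,B)=0$) exhibits $a_0$ as a $(\fz,1)_\lz$-atomic block with $|a_0|_{H^{1,\fz}_{\rm atb}(\mu)}\ls\|a\|_{L^\fz(\mu)}\mu(\rho B)$. The hypothesis gives $\|\cm(a_0)\|_{L^1(\mu)}\ls\|a\|_{L^\fz(\mu)}\mu(\rho B)$, and applying Kolmogorov to $g=\cm(a_0)$ on $E=B$ yields $\int_B[\cm(a_0)]^r\,d\mu\ls\mu(B)^{1-r}[\|a\|_{L^\fz(\mu)}\mu(\rho B)]^r$, where the implicit constant depends only on $r$. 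Dividing by $\mu(\rho B)$ and using $\mu(B)\le\mu(\rho B)$ produces the required bound $\ls\|a\|_{L^\fz(\mu)}^r$ on this piece.

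For the constant contribution, the inequality $\cm(a)\le\cm(a_0)+|m|\cm(\chi_B)$ together with $|m|\le\|a\|_{L^\fz(\mu)}$ and subadditivity of $x\mapsto x^r$ reduces the task to showing $\mu(\rho B)^{-1}\int_B[\cm(\chi_B)]^r\,d\mu\ls 1$. I would handle this by telescoping $\chi_B$ along an increasing chain of $(6,\bz_6)$-doubling balls $B=B_0\subsetneq B_1\subsetneq\cdots$ with $B_{k+1}:=\wz{B_k}$ (so that $\mu(6B_k)\ls\mu(B_k)$ for $k\ge 1$ and $\dz(B_k,B_{k+1})\ls 1$ uniformly). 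Writing $\chi_B=\sum_{k\ge 0}\az_k(\chi_{B_k}-c_k\chi_{B_{k+1}})$ with $\az_k:=\mu(B)/\mu(B_k)$ and $c_k:=\mu(B_k)/\mu(B_{k+1})$, each summand is a mean-zero atomic block on $B_{k+1}$; a two-atom decomposition with inner balls $B_k,B_{k+1}$ and the $\dz$-bound together with doubling show its $H^{1,\fz}_{\rm atb}(\mu)$-norm is $\ls\mu(\rho B)$ at $k=0$ and $\ls\mu(B)$ for $k\ge 1$. When $\mu(\cx)<\fz$ the telescope terminates at the constant $[\mu(\cx)]^{-1}$ regarded as a $(\fz,1)_\lz$-atomic block via Remark \ref{r1.3}(2).

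The main obstacle is the summability of this telescope: a termwise application of Kolmogorov on $B$ to each atomic summand produces contributions of order $\|a\|_{L^\fz(\mu)}^r\mu(B)$, whose sum fails to be absolutely convergent when divided by $\mu(\rho B)$. To extract the needed extra decay, one must exploit the kernel regularity condition \eqref{e1.6} jointly with the mean-zero structure of each summand $\az_k(\chi_{B_k}-c_k\chi_{B_{k+1}})$: the cancellation estimate used for the term ${\rm I}_2$ in the proof of Theorem \ref{t2.1}(i), applied in the present setting (where the integration is over the fixed small ball $B$ sitting deeply inside the much larger support $B_{k+1}$), yields a geometric decay factor in $k$ for $\int_B[\cm(\az_k(\chi_{B_k}-c_k\chi_{B_{k+1}}))]^r\,d\mu$. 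Combined with the decay of $\az_k^r$, this produces a convergent series and the claimed bound, uniformly in $B$ and $\rho$.
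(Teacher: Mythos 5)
Your treatment of the mean-zero part $a_0$ is fine (modulo the small point that $\dz(B,B)=\int_{(2B)\setminus B}\frac{d\mu(x)}{\lz(c_B,d(x,c_B))}$ is bounded but not zero, by Lemma \ref{l2.1}(ii)), but the constant part is where the proof breaks, and the obstacle you flag is fatal rather than technical. In the telescope $\chi_B=\sum_{k\ge0}\az_k(\chi_{B_k}-c_k\chi_{B_{k+1}})$, the $k$-th summand has $H^{1,\fz}_{\rm atb}(\mu)$-norm of order $\mu(B)$ uniformly in $k$: the coefficients $\az_k$, $\az_kc_k$ exactly cancel the growth of $\mu(B_k)$, $\mu(B_{k+1})$ in the atomic normalization, so the norms do not sum — consistent with the fact that $\chi_B\notin H^1(\mu)$. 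Your proposed rescue via the cancellation estimate for ${\rm I}_2$ cannot work: that estimate applies when the evaluation point lies far \emph{outside} the support of the mean-zero function (it invokes \eqref{e1.6} on the region $d(x,y)\ge2d(y,y')$), whereas here every $x\in B$ lies \emph{inside} the support $B_{k+1}$ of the $k$-th summand, so there is no separation to exploit. Moreover, \eqref{e1.6} is an integrated condition in the output variable $x$ for a fixed pair $y,\,y'$; what you would need is an integral in the input variable $y$ for fixed $x\in B$, which is not among the hypotheses. I see no source for the claimed geometric decay in $k$, and without it the series of Kolmogorov estimates diverges.

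The paper's proof avoids the constant part entirely. For $r_B\le\diam(\supp\mu)/40$ it takes the smallest $S=6^jB$ with $\mu(S\setminus2B)>0$ and a $(6,\,\bz_6)$-doubling ball $B_0\subset2S$ centered in $S\setminus(6^{-1}S\cup2B)$, so that $d(B_0,B)\ge r_B/2$ while $\dz(B,2S)\ls1$ and $\dz(B_0,2S)\ls1$. Then $b:=a+C_{B_0}\chi_{B_0}$ with $C_{B_0}:=-[\mu(B_0)]^{-1}\int_\cx a\,d\mu$ is a \emph{single} $(\fz,1)_\lz$-atomic block of norm $\ls\|a\|_{L^\fz(\mu)}\mu(2B)$, and — this is exactly what your telescope lacks — the correction satisfies $\cm(C_{B_0}\chi_{B_0})(x)\ls\|a\|_{L^\fz(\mu)}$ pointwise for $x\in B$ using only the size condition \eqref{e1.5}, because $B_0$ is separated from $B$ by distance $\gs r_B$ and $|C_{B_0}|\mu(B_0)\ls\|a\|_{L^\fz(\mu)}\mu(2B)\ls\|a\|_{L^\fz(\mu)}\lz(c_B,r_B)$. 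The Kolmogorov/H\"older step applied to $\cm(b)$ then finishes, and the case $r_B>\diam(\supp\mu)/40$ is reduced to this one by a finite covering with balls of radius $r_B/800$. If you wish to keep the splitting $a=a_0+m\chi_B$, the piece $m\chi_B$ must still be corrected by such a far-away bump rather than by an outward telescope.
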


\begin{proof}
Without loss of generality, we may assume that $\rho=2$. For any given
ball $B:=B(c_B,\,r_B)$, we consider the following two cases on $r_B$.

{\it Case} (i) $r_B\le\diam(\supp\mu)/40$. We use the same notation
as in the proof of \cite[Lemma 3.1]{lyy}. Let $S$ be the {\it
smallest ball} of the form $6^jB$ such that $\mu(6^jB\setminus
2B)>0$ with $j\in\nn$. Thus, $\mu(6^{-1}S\setminus 2B)=0$ and
$\mu(S\setminus 2B)>0$. This leads to $\mu(S\setminus(6^{-1}S\cup
2B))>0$ and $\wz B\subset \wz S$. By this and \cite[Lemma 3.3]{h10},
we choose $x_0\in S\setminus(6^{-1}S\cup 2B)$ such that the ball
center at $x_0$ with the radius $6^{-k}r_S$ for some integer $k\ge
2$ is $(6,\,\bz_6)$-doubling. Let $B_0$ be the {\it largest ball} of
this form. Then, it is easy to show that $B_0\subset 2S$ and
$d(B_0,\, B)\ge r_B/2$. It was proved, in the proof of \cite[Lemma
3.1]{lyy}, that $\dz(B,\,2S)\ls 1$ and $\dz(B_0,\,2S)\ls 1$, which
imply that $\dz(B,\,\wz{2S})\ls 1$ and $\dz(B_0,\,\wz{2S})\ls 1$.

For any $a\in L^\fz(\mu)$ supported on $B$, set
\begin{equation*}
C_{B_0}:=-\frac{1}{\mu(B_0)}\int_{\cx}a(x)\,d\mu(x)\quad{\rm
and}\quad b:=a+C_{B_0}\chi_{B_0}.
\end{equation*}
It is easy to see that $b$ is an $(\fz,\,1)_\lz$-atomic block with $\supp(b)\subset
2S$ and $\int_\cx b(x)\,d\mu(x)=0$. Moreover, by the choice of
$C_{B_0}$, the doubling property of $B_0$ and the assumption of $a$,
we have
\begin{equation}\label{e3.5}
|C_{B_0}|\mu(2B_0)\ls|C_{B_0}|\mu(B_0)\ls\|a\|_{L^1(\mu)}\ls\|a\|_{L^\fz(\mu)}\mu(2B),
\end{equation}
which further shows that
\begin{equation*}
\|b\|_{H^{1,\,\fz}_{\rm atb}(\mu)}
\le[1+\dz(B,\,2S)]\|a\|_{L^\fz(\mu)}\mu(2B)
+[1+\dz(B_0,\,2S)]|C_{B_0}|\mu(2B_0)\ls\|a\|_{L^\fz(\mu)}\mu(2B).
\end{equation*}

Notice that, for any $x\in B$ and $y\in B_0$, it holds true that $d(x,\,y)\ge r_B/2$. It
then follows, from the Minkowski inequality, \eqref{e1.5},
\eqref{e1.3}, \eqref{e1.4} and \eqref{e3.5}, that, for any $x\in B$,
\begin{align*}
\cm(C_{B_0}\chi_{B_0})(x)&=\lf[\int_0^\fz\lf|\int_{d(x,\,y)<
t}K(x,\,y)C_{B_0}\chi_{B_0}(y)\,d\mu(y)\r|^2\,\frac{dt}{t^3}\r]^{1/2}\\
&\ls|C_{B_0}|\int_{B_0}\lf[\int_{d(x,\,y)}^\fz\frac{dt}{t^3}\r]^{1/2}
\frac{d(x,\,y)}{\lz(x,\,d(x,\,y))}d\mu(y)\\
&\ls|C_{B_0}|\frac{\mu(B_0)}{\lz(x,\,r_B)}
\ls\frac{\|a\|_{L^\fz(\mu)}\mu(2B)}{\lz(c_B,\,r_B)}\ls\|a\|_{L^\fz(\mu)}.
\end{align*}
On the other hand, by the boundedness from $H^1(\mu)$ to $L^1(\mu)$
of $\cm$, together with the H\"older inequality, we see that, for any
$r\in(0,\,1)$, there exists a positive constant $C_r$, depending on
$r$, such that, for all $b\in H^1(\mu)$ and balls $B$,
\begin{equation*}
\int_B[\cm(b)(x)]^r\,d\mu(x)\le
C_r\frac{\|b\|^r_{H^1(\mu)}}{[\mu(B)]^{r-1}}.
\end{equation*}
Combining the above estimates, we see that
\begin{align}\label{e3.6}
\int_B[\cm(a)(x)]^r\,d\mu(x)&\le\int_B[\cm(b)(x)]^r\,d\mu(x)
+\int_B[\cm(C_{B_0}\chi_{B_0})]^r\,d\mu(x)\nonumber\\
&\ls\frac{\|b\|^r_{H^1(\mu)}}{[\mu(B)]^{r-1}}+\mu(B)\|a\|^r_{L^\fz(\mu)}
\ls\mu(2B)\|a\|^r_{L^\fz(\mu)}.
\end{align}

{\it Case} (ii) $r_B>\diam(\supp\mu)/40$. In this case, without loss
of generality, we may assume that $r_B\le8\diam(\supp\mu)$. Then Remark
\ref{r1.2}(ii) tells us that $B\cap\supp\mu$ is covered by finite
number balls $\{B_j\}_{j=1}^N$ with radius $r_B/800$, where
$N\in\nn$. For $j\in\{1,\,\ldots,\,N\}$ and $a$ as Lemma \ref{l3.3},
we define $a_j:=\frac{\chi_{B_j}}{\sum_{k=1}^N\chi_{B_k}}a$.

By the argument used in Case (i), we see that \eqref{e3.6} also
holds true, if we replace $B$ and $a$ by $2B_j$ and $a_j$, respectively.
It then follows that
\begin{align*}
\int_B[\cm(a)(x)]^r\,d\mu(x)
&\le\sum_{j=1}^N\int_{2B_j}[\cm(a_j)(x)]^r\,d\mu(x)\\
&\ls\sum_{j=1}^N\mu(4B_j)\|a_j\|^r_{L^\fz(\mu)}\ls\mu(2B)\|a\|^r_{L^\fz(\mu)},
\end{align*}
which, combined with \eqref{e3.6}, completes the proof of Lemma
\ref{l3.3}.
\end{proof}

\begin{lemma}\label{l3.4}
Let $r\in(0,\,1)$, $K$ satisfy \eqref{e1.5} and \eqref{e1.6}, and
$\cm$ be as in \eqref{e1.7}.  Suppose that $\cm$ is bounded from
$H^1(\mu)$ into $L^1(\mu)$, or from $L^1(\mu)$ into
$L^{1,\,\fz}(\mu)$. Then, there exists a positive constant $C_r$,
depending on $r$, such that, for all $f\in L^\fz_b(\mu)$,
\begin{equation}\label{e3.7}
\|M^\sharp_r(\cm(f))\|_{L^\fz(\mu)}\le C_r\|f\|_{L^\fz(\mu)}.
\end{equation}
\end{lemma}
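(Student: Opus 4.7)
The plan is to prove the pointwise bound $M^\sharp(|\cm(f)|^r)(x)\ls\|f\|_{L^\fz(\mu)}^r$ for every $x\in\cx$, which then gives the desired estimate after taking the $1/r$th power. By the definition of $M^\sharp$, it suffices to establish the two inequalities: (A) $\frac{1}{\mu(5B)}\int_B ||\cm(f)(y)|^r-m_{\wz B}(|\cm(f)|^r)|\,d\mu(y)\ls\|f\|_{L^\fz(\mu)}^r$ for every ball $B\ni x$, and (B) $|m_B(|\cm(f)|^r)-m_S(|\cm(f)|^r)|\ls(1+\dz(B,S))\|f\|_{L^\fz(\mu)}^r$ for every pair of $(6,\bz_6)$-doubling balls $B\subset S$ containing $x$. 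The main elementary tools are subadditivity $\cm(g+h)\le\cm(g)+\cm(h)$, together with $(a+b)^r\le a^r+b^r$ and $|a^r-b^r|\le|a-b|^r$ for $a,b\ge 0$ and $r\in(0,1)$; combined these yield the handy inequality $|\cm(f)(y)^r-\cm(f_2)(y)^r|\le\cm(f_1)(y)^r$ whenever $f=f_1+f_2$.

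For (A), I would split $f=f_1+f_2$ with $f_1:=f\chi_{2\wz B}$ and $f_2:=f\chi_{\cx\setminus 2\wz B}$, and use the standard two-point inequality
$$\frac{1}{\mu(5B)}\int_B|u-m_{\wz B}(u)|\,d\mu\le\frac{1}{\mu(5B)}\int_B|u-c|\,d\mu+\frac{\mu(B)}{\mu(5B)\mu(\wz B)}\int_{\wz B}|u-c|\,d\mu$$
with $u:=|\cm(f)|^r$ and $c:=\cm(f_2)(x_0)^r$, where $x_0\in\wz B$ is chosen so that $\cm(f_2)(x_0)<\fz$ (possible since either hypothesis forces $\cm(f_2)$ to be locally in $L^r(\mu)$, hence finite almost everywhere on $\wz B$). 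Then $|u(y)-c|\le\cm(f_1)(y)^r+|\cm(f_2)(y)-\cm(f_2)(x_0)|^r$, and the second term is dominated by $\|f\|_{L^\fz(\mu)}^r$ via a kernel-smoothness computation identical in form to \eqref{e2.13} applied at the scale of $\wz B$ (the relevant annular estimate $\int_{\cx\setminus 2\wz B}\sqrt{r_{\wz B}}/[\sqrt{d(z,c_{\wz B})}\lz(c_{\wz B},d(z,c_{\wz B}))]\,d\mu(z)\ls 1$ is available because $f_2$ vanishes on $2\wz B$). To control the $\cm(f_1)^r$ integrals, I would further split $f_1=f\chi_{2B}+f\chi_{2\wz B\setminus 2B}$: the first piece is handled either by Lemma~\ref{l3.3} (under the $H^1\to L^1$ hypothesis, applied to the ball $2B$ with $\rho=5/2$, yielding $\int_{2B}\cm(f\chi_{2B})^r\,d\mu\ls\mu(5B)\|f\|_{L^\fz(\mu)}^r$) or by Kolmogorov's inequality $\int_E|g|^r\,d\mu\ls\mu(E)^{1-r}\|g\|_{L^{1,\fz}(\mu)}^r$ applied to $g=\cm(f\chi_{2B})$ (under the $L^1\to L^{1,\fz}$ hypothesis, giving the same bound); while the annular piece satisfies $\cm(f\chi_{2\wz B\setminus 2B})(y)\ls(1+\dz(B,\wz B))\|f\|_{L^\fz(\mu)}\ls\|f\|_{L^\fz(\mu)}$ for $y\in B$ by a direct kernel estimate mirroring \eqref{e2.14}, the last bound being Lemma~\ref{l2.1}(iii). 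The integral over $\wz B$ is handled analogously, using the doubling property of $\wz B$ to absorb $\mu(6\wz B)\ls\mu(\wz B)$.

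For (B), split $f=f_1+f_2$ with $f_1:=f\chi_{2S}$ and $f_2:=f\chi_{\cx\setminus 2S}$. Since $S$ is $(6,\bz_6)$-doubling, Lemma~\ref{l3.3} (or Kolmogorov) applied on $2S$ yields $m_S(\cm(f_1)^r)\ls\|f\|_{L^\fz(\mu)}^r$, and a further split $f_1=f\chi_{2B}+f\chi_{2S\setminus 2B}$ gives $m_B(\cm(f_1)^r)\ls(1+\dz(B,S))\|f\|_{L^\fz(\mu)}^r$ by the same two-step analysis (doubling of $B$ plus the annular estimate \eqref{e2.14} for the outer piece). On the other hand, the kernel-smoothness bound at the scale of $S$ gives $|\cm(f_2)(y_1)-\cm(f_2)(y_2)|\ls\|f\|_{L^\fz(\mu)}$ for $y_1,y_2\in S$, hence $|m_B(\cm(f_2)^r)-m_S(\cm(f_2)^r)|\ls\|f\|_{L^\fz(\mu)}^r$ after averaging via $|a^r-b^r|\le|a-b|^r$. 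Assembling these estimates yields (B).

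The principal obstacle I anticipate is the bookkeeping between the two scales $B$ and $\wz B$ in (A): because the denominator $\mu(5B)$ is generally strictly smaller than $\mu(\wz B)$, one must apply Lemma~\ref{l3.3} (or Kolmogorov) with the support ball of each cut-off chosen just large enough that the annular correction picks up a harmless factor $1+\dz(B,\wz B)\ls 1$ from Lemma~\ref{l2.1}(iii) rather than blowing up. Arranging this balance --- and simultaneously ensuring that the two hypotheses can be treated in parallel via either Lemma~\ref{l3.3} or Kolmogorov's inequality --- is the main technical burden.
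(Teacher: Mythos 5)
Your proposal is correct and follows essentially the same route as the paper: both arguments bound the two components of $M^\sharp([\cm(f)]^r)$ by splitting $f$ into a local piece (controlled via Lemma~\ref{l3.3} under the $H^1\to L^1$ hypothesis or Kolmogorov's inequality under the weak-$(1,1)$ hypothesis), an annular piece (controlled by the size estimate as in \eqref{e2.14}, producing the $1+\dz(B,S)$ factor), and a far piece (controlled by the kernel-smoothness oscillation estimate as in \eqref{e2.13}), all glued with $|a^r-b^r|\le|a-b|^r$. The only difference is organizational: the paper compares everything to the averages $h_{B,r}=m_B([\cm(f\chi_{\cx\setminus 2B})]^r)$ and cuts at $2B$, while you compare to a point value $\cm(f\chi_{\cx\setminus 2\wz B})(x_0)^r$ and cut at $2\wz B$; this changes nothing of substance.
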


\begin{proof}
For any ball $B\subset\cx$ and $r\in(0,\,1)$,
set
$$
h_{B,\,r}:=m_B([\cm(f\chi_{\cx\setminus 2B})]^r).
$$
Observe that, for any ball $B\subset\cx$,
\begin{align*}
&\frac{1}{\mu(5B)}\int_B|[\cm(f)(x)]^r-m_{\wz
B}([\cm(f)]^r)|\,d\mu(x)\\
&\hs\le\frac{1}{\mu(5B)}\int_B|[\cm(f)(x)]^r-h_{B,\,r}|\,d\mu(x)
+|h_{B,\,r}-h_{\wz B,\,r}|\\
&\hs\hs+\frac{1}{\mu(\wz B)}\int_{\wz B}|[\cm(f)(x)]^r-h_{\wz
B,\,r}|\,d\mu(x)
\end{align*}
and, for two doubling balls $B\subset S$,
\begin{align*}
&|m_B([\cm(f)]^r)-m_S([\cm(f)]^r)|\\
&\hs\le|m_B([\cm(f)]^r)-h_{B,\,r}|
+|h_{B,\,r}-h_{S,\,r}|+|h_{S,\,r}-m_S([\cm(f)]^r)|.
\end{align*}
Therefore, to show \eqref{e3.7}, it suffices to prove that, for all balls $B\subset\cx$,
\begin{equation}\label{e3.8}
{\rm
D_1}:=\frac{1}{\mu(5B)}\int_B|[\cm(f)(x)]^r-h_{B,\,r}|\,d\mu(x)\ls\|f\|^r_{L^\fz(\mu)}
\end{equation}
and, for all balls $B\subset S\subset\cx$ with $S$ being
$(6,\,\bz_6)$-doubling ball,
\begin{equation}\label{e3.9}
{\rm
D_2}:=|h_{B,\,r}-h_{S,\,r}|\ls[1+\dz(B,\,S)]^r\|f\|^r_{L^\fz(\mu)}.
\end{equation}

To prove \eqref{e3.8}, from the trivial inequality,
$||a|^r-|b|^r|\le|a-b|^r$ for all $a,\,b\in\cc$ and $r\in(0,\,1)$, and
the fact that $\cm$ is sublinear, we deduce that
\begin{align*}
{\rm D_1}&\le\frac{1}{\mu(5B)}
\int_B|[\cm(f)(x)]^r-[\cm(f\chi_{\cx\setminus2B})(x)]^r|\,d\mu(x)\\
&\hs+\frac{1}{\mu(5B)}\int_B|[\cm(f\chi_{\cx\setminus2B})(x)]^r-h_{B,\,r}|\,d\mu(x)\\
&\le\frac{1}{\mu(5B)}\int_B[\cm(f\chi_{2B})(x)]^r\,d\mu(x)\\
&\hs+\frac{1}{\mu(B)}\frac{1}{\mu(5B)} \int_B\int_B|\cm(f\chi_{\cx\setminus2B})(x)
-\cm(f\chi_{\cx\setminus2B})(y)|^r\,d\mu(y)\,d\mu(x)\\
&=:{\rm D_{1,\,1}}+{\rm D_{1,\,2}}.
\end{align*}

For the term ${\rm D_{1,\,1}}$, we consider the following two cases.

{\it Case} (i) $\cm$ is bounded from $H^1(\mu)$ into $L^1(\mu)$. By
Lemma \ref{l3.3}, we have
\begin{equation*}
{\rm
D_{1,\,1}}\le\frac{1}{\mu(5B)}\int_{2B}[\cm(f\chi_{2B})(x)]^r\,d\mu(x)
\ls\|f\chi_{2B}\|^r_{L^\fz(\mu)}\le\|f\|^r_{L^\fz(\mu)}.
\end{equation*}

{\it Case} (ii) $\cm$ is bounded from $L^1(\mu)$ into
$L^{1,\,\fz}(\mu)$. By the Kolmogorov inequality (see \cite[p.\,102]{d01}), we conclude that
\begin{equation*}
{\rm
D_{1,\,1}}\le\frac{[\mu(2B)]^{1-r}}{\mu(5B)}\|f\chi_{2B}\|^r_{L^1(\mu)}
\ls\|f\|^r_{L^\fz(\mu)}.
\end{equation*}
Therefore, ${\rm D_{1,\,1}}\ls\|f\|^r_{L^\fz(\mu)}$.

For the term ${\rm D_{1,\,2}}$,  by an argument used in the
estimate for \eqref{e2.13}, we see that, for all $x,\,y\in B$,
$|\cm(f\chi_{\cx\setminus2B})(x)
-\cm(f\chi_{\cx\setminus2B})(y)|\ls\|f\|_{L^\fz(\mu)}$, which
implies that ${\rm D_{1,\,2}}\ls\|f\|^r_{L^\fz(\mu)}$.

Combining the estimates for ${\rm D_{1,\,1}}$ and ${\rm D_{1,\,2}}$,
we obtain the desired estimate \eqref{e3.8}.

Now we prove \eqref{e3.9}. Write
\begin{align*}
|h_{B,\,r}-h_{S,\,r}|
&=|m_B([\cm(f\chi_{\cx\setminus2B})]^r)-m_S([\cm(f\chi_{\cx\setminus2S})]^r)|\\
&\le|m_B([\cm(f\chi_{4S\setminus2B})]^r)|+|m_S([\cm(f\chi_{4S\setminus2S})]^r)|\\
&\hs+|m_B([\cm(f\chi_{\cx\setminus4S})]^r)-m_S([\cm(f\chi_{\cx\setminus4S})]^r)|\\
&=:{\rm D_{2,\,1}}+{\rm D_{2,\,2}}+{\rm D_{2,\,3}}.
\end{align*}

Similar to the estimate for \eqref{e2.14}, we see that, for all
$x\in B$,
$\cm(f\chi_{4S\setminus2B})(x)\ls[1+\dz(B,\,S)]\|f\|_{L^\fz(\mu)}$,
which further implies that ${\rm
D_{2,\,1}}\ls[1+\dz(B,\,S)]^r\|f\|^r_{L^\fz(\mu)}$.

To estimate ${\rm D_{2,\,2}}$, notice that $S$ is a
$(6,\,\bz_6)$-doubling ball. Then, similar to the estimate for ${\rm
D_{1,\,1}}$, we have
\begin{equation*}
{\rm D_{2,\,2}}
\ls\frac{1}{\mu(6S)}\int_{4S}[\cm(f\chi_{4S\setminus2S})(x)]^r\,d\mu(x)
\ls\|f\|^r_{L^\fz(\mu)}.
\end{equation*}

Similar to the estimate for ${\rm D_{1,\,2}}$, it is easy to see
that ${\rm D_{2,\,3}}\ls\|f\|^r_{L^\fz(\mu)}$, which, together with
the estimates for ${\rm D_{2,\,1}}$ and ${\rm D_{2,\,2}}$, implies
\eqref{e3.9} and hence completes the proof of Lemma \ref{l3.4}.
\end{proof}

\begin{proof}[Proof of Theorem \ref{t1.1}]
By Theorem \ref{t2.1}, we have already known that (i) implies (ii),
(iii) and (iv). Obviously, (iii) implies (i). Therefore, to prove
Theorem \ref{t1.1}, it
suffices to prove that (ii) implies (iii) and
(iv) implies (iii).

To prove (ii) implies (iii), by the Marcinkiewicz
interpolation theorem, we only need to prove that, for all $f\in
L^p(\mu)$ with $p\in(1,\,\fz)$ and $\ell\in(0,\,\fz)$,
\begin{equation}\label{e3.10}
\mu(\{x\in\cx:\ \cm(f)(x)>\ell\})\ls\ell^{-p}\|f\|^p_{L^p(\mu)}.
\end{equation}
Let $r\in(0,\,1)$ and $N_r$ be as in \eqref{e3.2}. Notice that
$\cm(f)\le N_r(\cm(f))$ $\mu$-almost everywhere on $\cx$ and
$L^\fz_b(\mu)$ is dense in $L^p(\mu)$ for all $p\in(1,\,\fz)$. Then,
by a standard density argument, to prove \eqref{e3.10}, it suffices
to prove that, for all $f\in L_b^\fz(\mu)$ and $p\in(1,\,\fz)$,
\begin{equation}\label{e3.11}
\sup_{\ell\in(0,\,\fz)}\ell^p\mu\lf(\lf\{x\in\cx:\ N_r(\cm(f))(x)>\ell\r\}\r)
\ls\|f\|^p_{L^p(\mu)}.
\end{equation}
To this end, we consider the following two cases for $\mu(\cx)$.

{\it Case} (i) $\mu(\cx)=\fz$. Fix $\ell\in(0,\,\fz)$. For any $f\in
L^\fz_b(\mu)$, we split $f$ into $f_1$ and $f_2$ with
$f_1:=f\chi_{\{y\in\cx:\ |f(y)|>\ell\}}$ and
$f_2:=f\chi_{\{y\in\cx:\ |f(y)|\le\ell\}}$. It is easy to see that
\begin{equation}\label{e3.12}
\|f_1\|_{L^1(\mu)}\le\ell^{1-p}\|f\|^p_{L^p(\mu)},
\,\,\|f_2\|_{L^1(\mu)}\le\|f\|_{L^1(\mu)}\,\,{\rm
and}\,\,\|f_2\|_{L^\fz(\mu)}\le\ell.
\end{equation}
For each $r\in(0,\,1)$, let $M^\sharp_r$ be as in \eqref{e3.1}. From
Lemma \ref{l3.4} and \eqref{e3.12}, it follows that
$$
\|M^\sharp_r(\cm(f_2))\|_{L^\fz(\mu)}\ls\|f_2\|_{L^\fz(\mu)}\ls\ell.
$$
Hence, if $c_0$ is a sufficiently large constant, we have
\begin{equation}\label{e3.13}
\mu(\{x\in\cx:\ M^\sharp_r(\cm(f_2))(x)>c_0\ell\})=0.
\end{equation}
On the other hand, by \eqref{e3.12}, together with the boundedness from
$L^1(\mu)$ into $L^{1,\,\fz}(\mu)$ of $\cm$ and Lemma \ref{l3.2},
we see that, for any $p\in(1,\,\fz)$ and $R\in(0,\fz)$,
\begin{align*}
&\sup_{\ell\in(0,\,R)}\ell^p\mu\lf(\lf\{x\in\cx:\ N_r(\cm(f_2))(x)>\ell\r\}\r)\\
&\hs\ls\sup_{\ell\in(0,\,R)}\ell^{p-1}\sup_{\tau\in[\ell,\,\fz)}\tau\mu(\{x\in\cx:\ \cm(f_2)(x)>\tau\})
<\fz.
\end{align*}
It then follows, from the fact that $N_r\circ \cm$ is quasi-linear,
Lemma \ref{l3.1} and \eqref{e3.13}, that there exists a positive
constant $C$ such that
\begin{align}\label{e3.14}
&\sup_{\ell\in(0,\,\fz)}\ell^p\mu\lf(\lf\{x\in\cx:\ N_r(\cm(f))(x)>Cc_0\ell\r\}\r)\nonumber\\
&\hs\le\sup_{\ell\in(0,\,\fz)}\ell^p\mu\lf(\lf\{x\in\cx:\ N_r(\cm(f_2))(x)>c_0\ell\r\}\r)\nonumber\\
&\hs\hs+\sup_{\ell\in(0,\,\fz)}\ell^p\mu\lf(\lf\{x\in\cx:\ N_r(\cm(f_1))(x)>c_0\ell\r\}\r)\nonumber\\
&\hs\ls\sup_{\ell\in(0,\,\fz)}\ell^p\mu\lf(\lf\{x\in\cx:\ M^\sharp_r(\cm(f_2))(x)>c_0\ell\r\}\r)\nonumber\\
&\hs\hs+\sup_{\ell\in(0,\,\fz)}\ell^p\mu\lf(\lf\{x\in\cx:\ N_r(\cm(f_1))(x)>c_0\ell\r\}\r)\nonumber\\
&\hs\sim\sup_{\ell\in(0,\,\fz)}\ell^p\mu\lf(\lf\{x\in\cx:\ N_r(\cm(f_1))(x)>\ell\r\}\r).
\end{align}
By the boundedness from $L^1(\mu)$ into $L^{1,\,\fz}(\mu)$ of $N$,
the boundedness from $L^1(\mu)$ into $L^{1,\,\fz}(\mu)$ of $\cm$ and
\eqref{e3.12}, we conclude that
\begin{align}\label{e3.15}
&\mu(\{x\in\cx:\ N_r(\cm(f_1))(x)>\ell\})\nonumber\\
&\hs\le\mu\lf(\lf\{x\in\cx:\
N\lf([\cm(f_1)]^r
\chi_{\{y\in\cx:\ (\cm(f_1))(y)>\ell/2^{\frac1r}\}}\r)(x)>\frac{\ell^r}{2}\r\}\r)\nonumber\\
&\hs\ls\ell^{-r}\int_\cx \lf[\cm(f_1)(x)
\chi_{\{y\in\cx:\ \cm(f_1)(y)>\ell/2^{\frac1r}\}}(x)\r]^r\,d\mu(x)\nonumber\\
&\hs\ls\ell^{-r}\mu\lf(\lf\{x\in\cx:\ \cm(f_1)(x)>\ell/2^{\frac1r}\r\}\r)
\int_0^{\ell/2^{\frac1r}}s^{r-1}\,ds\nonumber\\
&\hs\hs+\ell^{-r}\int_{\ell/2^{\frac1r}}^\fz
s^{r-1}\mu(\{x\in\cx:\ \cm(f_1)(x)>s\})\,ds\nonumber\\
&\hs\ls\mu\lf(\lf\{x\in\cx:\ \cm(f_1)(x)>\ell/2^{\frac1r}\r\}\r)
+\dfrac{1}{\ell}\sup_{s\ge\ell/2^{\frac1r}}s\mu(\{x\in\cx:\ \cm(f_1)(x)>s\})\nonumber\\
&\hs\ls\dfrac{\|f_1\|_{L^1(\mu)}}{\ell}\ls\ell^{-p}\|f\|^p_{L^p(\mu)},
\end{align}
which, together with \eqref{e3.14}, implies \eqref{e3.11}.

{\it Case} (ii) $\mu(\cx)<\fz$. In this case, we assume that $f\in
L^\fz_b(\mu)$. For each fixed $\ell\in(0,\,\fz)$, with the same notation $f_1$ and
$f_2$ as in Case (i), we have $f:=f_1+f_2$. Let $r\in(0,\,1)$. We
claim that
\begin{equation}\label{e3.16}
{\rm F}:=\frac{1}{\mu(\cx)}\dint_\cx[\cm(f_2)(x)]^r\,d\mu(x)\ls
\ell^r.
\end{equation}
Indeed, by the boundedness from $L^1(\mu)$ into $L^{1,\,\fz}(\mu)$
of $\cm$, we have
\begin{align*}
&\dint_\cx[\cm(f_2)(x)]^r\,d\mu(x)\\
&\hs= r\dint_0^{\|f_2\|_{L^1(\mu)}/{\mu(\cx)}}t^{r-1}\mu
\lf(\{x\in\cx:\  \cm(f_2)(x)>t\}\r)\,dt
+r\dint_{\|f_2\|_{L^1(\mu)}/{\mu(\cx)}}^\fz \cdots\\
&\hs\ls\mu(\cx)\dint_0^{\|f_2\|_{L^1(\mu)}/{\mu(\cx)}}t^{r-1}\,dt
+\|f_2\|_{L^1(\mu)}\dint_{\|f_2\|_{L^1(\mu)}/{\mu(\cx)}}^\fz
t^{r-2}\,dt\\
&\hs\ls [\mu(\cx)]^{1-r}\|f_2\|_{L^1(\mu)}^r\ls \mu(\cx)\ell^r,
\end{align*}
which implies \eqref{e3.16}.

Observe that $\int_\cx([\cm(f_2)(x)]^r-{\rm F})\,d\mu(x)=0$ and, for any
$R\in(0,\,\fz)$,
$$
\sup_{\ell\in(0,\,R)}\ell^p\mu\lf(\lf\{x\in\cx:\ N([\cm(f_2)]^r-{\rm
F})(x)>\ell\r\}\r) \le R^p\mu(\cx)<\fz.
$$
It then follows, from Lemma \ref{l3.1}, $M^\sharp_r(\rm F)=0$,
\eqref{e3.13} and \eqref{e3.15}, that there exists a positive
constant $\wz c$ such that
\begin{align*}
&\sup_{\ell\in(0,\,\fz)}\ell^p\mu\lf(\lf\{x\in\cx:\ N_r(\cm(f))(x)>\wz
cc_0\ell\r\}\r)\\
&\hs\le\sup_{\ell\in(0,\,\fz)}\ell^p\mu\lf(\lf\{x\in\cx:\ N([\cm(f_2)]^r-{\rm
F})(x)>(c_0\ell)^r\r\}\r)\\
&\hs\hs+\sup_{\ell\in(0,\,\fz)}\ell^p\mu\lf(\lf\{x\in\cx:\ N_r(\cm(f_1))(x)>c_0\ell\r\}\r)\nonumber\\
&\hs\ls\sup_{\ell\in(0,\,\fz)}\ell^p\mu\lf(\lf\{x\in\cx:\ M^\sharp_r(\cm(f_2))(x)>c_0\ell\r\}\r)\\
&\hs\hs+\sup_{\ell\in(0,\,\fz)}\ell^p\mu\lf(\lf\{x\in\cx:\ N_r(\cm(f_1))(x)>c_0\ell\r\}\r)\nonumber\\
&\hs\sim\sup_{\ell\in(0,\,\fz)}\ell^p\mu\lf(\lf\{x\in\cx:\ N_r(\cm(f_1))(x)>\ell\r\}\r)
\ls\|f\|^p_{L^p(\mu)},
\end{align*}
where, in the first inequality, we chose $c_0$ large enough such that
${\rm F}\le (c_0\ell)^r$. This finishes the proof that
(ii) implies (iii).

Now we prove that (iv) implies (iii). To this end, we consider the following two
cases for $\mu(\cx)$.

{\it Case} (I) $\mu(\cx)=\fz$. In this case, let
$L^\fz_{b,\,0}(\mu):=\lf\{f\in L_b^\fz(\mu):\ \int_\cx
f(x)\,d\mu(x)=0\r\}$. Then, $L^\fz_{b,\,0}(\mu)$ is dense in
$L^p(\mu)$ for all $p\in(1, \fz)$. Therefore, it suffices to prove
that \eqref{e3.11} holds true for all $f\in L_{b,\,0}^\fz(\mu)$ and
$p\in(1,\,\fz)$.

For each fixed $\ell\in(0,\,\fz)$, applying Lemma \ref{l2.2}, we
conclude that $f=g+h$, where $h$ is as in Lemma \ref{l2.2} and $g:=f-h$,
such that
\begin{equation}\label{e3.17}
\|g\|_{L^\fz(\mu)}\ls\ell,
\end{equation} and
\begin{equation}\label{e3.18}
h\in H^1(\mu),\quad \|h\|_{H^1(\mu)}\ls\ell^{1-p}\|f\|^p_{L^p(\mu)}.
\end{equation}
For each $r\in(0,\,1)$, let $M^\sharp_r$ be as in \eqref{e3.1}.
Similar to \eqref{e3.13}, if $\wz c_0$ is a sufficiently large
constant, we then have
\begin{equation}\label{e3.19}
\mu(\{x\in\cx:\ M^\sharp_r(\cm(g))(x)>\wz c_0\ell\})=0.
\end{equation}
On the other hand, since both $f$ and $h$ belong to $H^1(\mu)$, we
see that $g\in H^1(\mu)$ and
$$
\|g\|_{H^1(\mu)}\le \|f\|_{H^1(\mu)}+\|h\|_{H^1(\mu)} \ls
\|f\|_{H^1(\mu)}+\ell^{1-p}\|f\|_{L^p(\mu)}^p,
$$
which, together with the boundedness from $H^1(\mu)$ into $L^1(\mu)$
of $\cm$ and Lemma \ref{l3.2}, implies that, for any $p\in(1,\,\fz)$
and $R\in(0,\,\fz)$,
\begin{align*}
&\sup_{\ell\in(0,\,R)}\ell^p\mu\lf(\lf\{x\in\cx:\ N_r(\cm(g))(x)>\ell\r\}\r)\\
&\hs\ls\sup_{\ell\in(0,\,R)}\ell^{p-1}
\sup_{\tau\in[\ell,\,\fz)}\tau\mu(\{x\in\cx:\ \cm(g)(x)>\tau\})<\fz.
\end{align*}
By some estimates similar to those of \eqref{e3.14} and \eqref{e3.15},
via the boundedness of $\cm$ from $H^1(\mu)$ into $L^1(\mu)$ and
\eqref{e3.18}, we conclude that there exists a positive constant $C$ such
that
\begin{align*}
\sup_{\ell\in(0,\,\fz)}\ell^p\mu\lf(\lf\{x\in\cx:\ N_r(\cm(f))(x)> C\wz
c_0\ell\r\}\r)
&\ls\sup_{\ell\in(0,\,\fz)}\ell^p\mu\lf(\lf\{x\in\cx:\ N_r(\cm(h))(x)>\ell\r\}\r)\\
&\ls\sup_{\ell\in(0,\,\fz)}\ell^p\dfrac{\|h\|_{H^1(\mu)}}{\ell}
\ls\|f\|^p_{L^p(\mu)},
\end{align*}
which implies that \eqref{e3.11} holds true for all $f\in
L_{b,\,0}^\fz(\mu)$ and $p\in(1,\,\fz)$.

{\it Case } (II) $\mu(\cx)<\fz$. In this case, we assume that $f\in
L^\fz_b(\mu).$ Notice that, if $\ell\in(0, \ell_0]$, where $\ell_0$
is as in Lemma \ref{l2.2}, then \eqref{e3.10} holds true trivially. Thus,
we only need to consider the case when $\ell\in(\ell_0,\,\fz)$.
For each fixed $\ell\in(\ell_0,\,\fz)$, applying Lemma \ref{l2.2}, we see that
$f=g+h$ with $g$ and $h$ satisfying \eqref{e3.17} and \eqref{e3.18}, respectively.
Let $r\in(0,\,1)$. We claim that
\begin{equation}\label{e3.20}
{\rm G}:=\frac{1}{\mu(\cx)}\dint_\cx[\cm(g)(x)]^r\,d\mu(x)\ls\ell^r.
\end{equation}
Indeed, since $\mu(\cx)<\fz$, we regard $\cx$ as a ball and the
constant function having value $[\mu(\cx)]^{-1}$ as a
$(p,\,1)_\lz$-atomic block, respectively. By \eqref{e3.17}, we have
$$
g_0:=g-\frac1{\mu(\cx)}\int_\cx g(x)\,d\mu(x)\in H^1(\mu)\,\,{\rm
and}\,\,\|g_0\|_{H^1(\mu)}\ls\ell.
$$
It then follows, from the H\"older inequality, the boundedness from
$H^1(\mu)$ into $L^1(\mu)$ of $\cm$ and \eqref{e3.17}, that
\begin{align*}
&\dint_\cx[\cm(g)(x)]^r\,d\mu(x)\\
&\hs\le[\mu(\cx)]^{1-r}\lf[\dint_\cx\cm(g)(x)\,d\mu(x)\r]^r\\
&\hs\le[\mu(\cx)]^{1-r}\lf\{\dint_\cx\cm(g_0)(x)\,d\mu(x)
+\ell\mu(\cx)\dint_\cx\cm([\mu(\cx)]^{-1})(x)\,d\mu(x)\r\}^r\\
&\hs\ls\lf\{\|g_0\|_{H^1(\mu)}+\ell\|[\mu(\cx)]^{-1}\|_{H^1(\mu)}\r\}^r\ls\ell^r,
\end{align*}
which implies \eqref{e3.20}.

Notice that $\int_\cx([\cm(g)(x)]^r-{\rm G})\,d\mu(x)=0$ and, for any
$R\in(0,\,\fz)$,
$$\sup_{\ell\in(0,\,R)}\ell^p\mu\lf(\lf\{x\in\cx:\ N([\cm(g)]^r-{\rm G})(x)>\ell\r\}\r)
\le R^p\mu(\cx)<\fz.$$ Therefore, from an argument similar to that used
in Case (ii), together with Lemma \ref{l3.1}, $M^\sharp_r(\rm G)=0$ and
\eqref{e3.19}, we deduce that
\begin{align*}
&\sup_{\ell\in(\ell_0,\,\fz)}\ell^p\mu\lf(\lf\{x\in\cx:\ N_r(\cm(f))(x)>\ell\r\}\r)\\
&\hs\ls\sup_{\ell\in(0,\,\fz)}\ell^p\mu\lf(\lf\{x\in\cx:\ N_r(\cm(h))(x)>\ell\r\}\r)
\ls\|f\|^p_{L^p(\mu)},
\end{align*}
which completes the proof that (iv) implies (iii) and hence the proof of Theorem \ref{t1.1}.
\end{proof}

\section{Proof of Theorem \ref{t1.2}\label{s4}}

To prove Theorem \ref{t1.2}, we need the following
lemma, which is a corollary of \cite[Lemma 3.2]{ly11}.

\begin{lemma}\label{l4.1}
Let $\eta\in(1,\,\fz)$ and $\bz_6$ be as in \eqref{e2.1}. Then, there exists a positive constant
$C$ such that, for all $f\in\rbmo(\mu)$ and balls $B$,
\begin{equation}\label{e4.1}
\frac{1}{\mu(\eta B)}\int_B\lf|f(y)-m_{\wz
B}(f)\r|\,d\mu(y)\le C\|f\|_{\rbmo(\mu)}
\end{equation}
and, for all $(6,\,\bz_6)$-doubling balls $B\subset S$,
\begin{equation}\label{e4.2}
|m_B(f)-m_S(f)|\le C [1+\dz(B,\,S)]\|f\|_{\rbmo(\mu)}.
\end{equation}
\end{lemma}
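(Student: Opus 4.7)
The plan is to reduce both estimates to the definition of $\rbmo(\mu)$ (Definition \ref{d1.6}) together with the $\rho$-independence of this space and the elementary properties of $\dz$ collected in Lemma \ref{l2.1}. The mechanism is to insert into the integrand the abstract numbers $f_B$ from the $\rbmo$ definition, and then separately compare each $f_B$ with the concrete mean $m_{\wz B}(f)$ (or $m_B(f)$) by passing through a doubling ball where Definition \ref{d1.6} yields ordinary John--Nirenberg--type control.

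The preliminary step is to show that, for every ball $B$,
$$
|f_B - m_{\wz B}(f)|\ls \|f\|_{\rbmo(\mu)}.
$$
I would split this as $|f_B - f_{\wz B}| + |f_{\wz B} - m_{\wz B}(f)|$. For the first term, Lemma \ref{l2.1}(iii) gives $\dz(B,\wz B)\ls 1$, so the second clause of Definition \ref{d1.6} yields $|f_B - f_{\wz B}|\ls \|f\|_{\rbmo(\mu)}$. For the second term, since $\wz B$ is $(6,\bz_6)$-doubling by construction, $\mu(\wz B)\ge \bz_6^{-1}\mu(6\wz B)$, and hence (using the $\rho$-independence of $\rbmo(\mu)$ with $\rho=6$)
$$
|f_{\wz B} - m_{\wz B}(f)|\le \frac{1}{\mu(\wz B)}\int_{\wz B}|f-f_{\wz B}|\,d\mu \le \bz_6 \cdot \frac{1}{\mu(6\wz B)}\int_{\wz B}|f-f_{\wz B}|\,d\mu\ls \|f\|_{\rbmo(\mu)}.
$$

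With this in hand, both assertions fall out by the triangle inequality. For \eqref{e4.1}, pick $\rho=\eta$ in Definition \ref{d1.6} and write
$$
\frac{1}{\mu(\eta B)}\int_B|f-m_{\wz B}(f)|\,d\mu \le \frac{1}{\mu(\eta B)}\int_B|f-f_B|\,d\mu + \frac{\mu(B)}{\mu(\eta B)}|f_B - m_{\wz B}(f)|;
$$
the first summand is directly $\le \|f\|_{\rbmo(\mu)}$, and the second is $\ls \|f\|_{\rbmo(\mu)}$ by the preliminary step. For \eqref{e4.2}, decompose
$$
|m_B(f)-m_S(f)|\le |m_B(f)-f_B|+|f_B-f_S|+|f_S-m_S(f)|.
$$
The middle term is bounded by $[1+\dz(B,S)]\|f\|_{\rbmo(\mu)}$ directly from Definition \ref{d1.6}. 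Since $B$ and $S$ are themselves $(6,\bz_6)$-doubling, the preliminary step applies with $B$ in place of $\wz B$ (and similarly with $S$), so the two outer terms are each $\ls \|f\|_{\rbmo(\mu)}$.

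There is no genuine obstacle here; the argument is essentially bookkeeping around the $\rho$-independence of $\rbmo(\mu)$ and the estimate $\dz(B,\wz B)\ls 1$. The only point that deserves attention is the dual use of Definition \ref{d1.6}, once with $\rho=\eta$ for the ambient estimate \eqref{e4.1} and once with $\rho=6$ to convert the $\rbmo$-seminorm into the control of means on doubling balls.
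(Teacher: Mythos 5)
Your proof is correct, but note that the paper does not actually prove this lemma at all: it is stated as a corollary of an external result (\cite[Lemma 3.2]{ly11}), so you have supplied a self-contained argument where the paper offers only a citation. Your route --- insert the abstract numbers $f_B$ from Definition \ref{d1.6}, control $|f_B-m_{\wz B}(f)|$ via $\dz(B,\wz B)\ls 1$ and the $(6,\bz_6)$-doubling of $\wz B$, then conclude by the triangle inequality --- is exactly the standard mechanism behind the cited lemma, and all the individual estimates check out. The one point to tighten is your ``dual use'' of Definition \ref{d1.6} with $\rho=\eta$ and $\rho=6$: the admissible families $\{f_B\}$ attached to different values of $\rho$ need not coincide, so the number $f_B$ appearing in the first summand of your splitting for \eqref{e4.1} must be the same one fed into the preliminary step. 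This is repaired in one line by fixing a single family for one parameter $\rho_0\in(1,\min\{\eta,6\}]$ and using the monotonicity of $\rho\mapsto\mu(\rho B)$: then $\frac{1}{\mu(\eta B)}\int_B|f-f_B|\,d\mu\le\frac{1}{\mu(\rho_0 B)}\int_B|f-f_B|\,d\mu$ and, for a $(6,\bz_6)$-doubling ball $Q$, $\frac{1}{\mu(Q)}\int_Q|f-f_Q|\,d\mu\le\frac{\bz_6}{\mu(6Q)}\int_Q|f-f_Q|\,d\mu\le\frac{\bz_6}{\mu(\rho_0 Q)}\int_Q|f-f_Q|\,d\mu$, after which the $\rho$-independence of the norm converts the constant $C_{\rho_0}$ into $\|f\|_{\rbmo(\mu)}$. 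With that adjustment the argument is complete.
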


\begin{proof}[Proof of Theorem \ref{t1.2}]
From Theorems \ref{t1.1} and \ref{t2.1}, we deduce Theorem \ref{t1.2}(i) immediately.
To prove Theorem \ref{t1.2}(ii), we first claim that, for all $f\in L^\fz_b(\mu)$ with $\supp f\subset
B$,
\begin{equation}\label{e4.3}
\int_B\cm(f)(x)\,d\mu(x)\ls\mu(2B)\|f\|_{L^\fz(\mu)}.
\end{equation}

We consider the following two cases for $r_B$.

{\it Case} (i) $r_B\le\diam(\supp\mu)/40$. In this case, choose
$\eta=2$ in Lemma \ref{l4.1}. It then follows, from Lemma \ref{l4.1} and
\eqref{e1.8}, that
\begin{equation*}
\int_B\lf|\cm(f)(x)-m_{\wz
B}(\cm(f))\r|\,d\mu(x)\ls\mu(2B)\|\cm(f)\|_{\rbmo(\mu)}\ls\mu(2B)\|f\|_{L^\fz(\mu)}.
\end{equation*}
Therefore, the proof of \eqref{e4.3} is reduced to showing
\begin{equation}\label{e4.4}
\lf|m_{\wz B}(\cm(f))\r|\ls\|f\|_{L^\fz(\mu)}.
\end{equation}
Let $S$, $B_0$ be the same notation as in the proof of Lemma
\ref{l3.3}. Recall that $\dz(B,\,2S)\ls 1$, $\dz(B_0,\,2S)\ls 1$,
$\dz(B,\,\wz{2S})\ls 1$ and $\dz(B_0,\,\wz{2S})\ls 1$. By this, together
with Lemmas \ref{l2.1} and \ref{l4.1}, we see that
\begin{align*}
\lf|m_{B_0}(\cm(f))-m_{\wz B}(\cm(f))\r|
&\le\lf|m_{B_0}(\cm(f)-m_{\wz{2S}}(\cm(f))\r|
+\lf|m_{\wz{2S}}(\cm(f))-m_{\wz B}(\cm(f))\r|\\
&\le\lf[2+\dz(B_0,\,\wz{2S})+\dz(\wz
B,\,\wz{2S})\r]\|\cm(f)\|_{\rbmo(\mu)}\ls\|f\|_{L^\fz(\mu)},
\end{align*}
which further implies that, to prove \eqref{e4.4}, it suffices to prove that
\begin{equation}\label{e4.5}
\lf|m_{B_0}(\cm(f))\r|\ls\|f\|_{L^\fz(\mu)}.
\end{equation}
Notice that, for all $y\in B_0$ and $z\in B$, it holds true that $d(y,\,z)\ge r_B/2$
and hence $d(c_B,\,y)\le d(c_B,\,z)+d(z,\,y)\ls d(y,\,z)$. By the
Minkowski inequality, \eqref{e1.5}, \eqref{e1.3}, \eqref{e1.4} and
the fact that $\supp f\subset B$, we conclude that, for all $y\in B_0$,
\begin{align*}
\cm(f)(y)&=\lf[\int_0^\fz\lf|\int_{d(y,\,z)<
t}K(y,\,z)f(z)\,d\mu(z)\r|^2\,\frac{dt}{t^3}\r]^{1/2}\\
&\ls\|f\|_{L^\fz(\mu)}\int_{B}\lf[\int_{d(y,\,z)}^\fz\frac{dt}{t^3}\r]^{1/2}
\frac{d(y,\,z)}{\lz(y,\,d(y,\,z))}d\mu(z)\\
&\ls\|f\|_{L^\fz(\mu)}\int_{B}\frac{1}{\lz(c_B,\,d(y,\,z))}d\mu(z)
\ls\|f\|_{L^\fz(\mu)}\frac{\mu(B)}{\lz(c_B,\,r_B)}
\ls\|f\|_{L^\fz(\mu)},
\end{align*}
which implies \eqref{e4.5}. Hence, \eqref{e4.3} holds true in this case.

{\it Case} (ii) $r_B>\diam(\supp\mu)/40$. In this case, the argument
is almost the same as the one of Case (ii) in the proof of Lemma
\ref{l3.3}. We omit the details, which shows that the claim
\eqref{e4.3} also holds true in this case.

Now based on the claim \eqref{e4.3}, we prove Theorem \ref{t1.2}(ii).
Take $\rho=4$ and $p=\fz$ in Definition \ref{d1.4}. By the
definition of $H^{1,\,\fz}_{\rm fin}(\mu)$, it suffices to show that,
for any $(\fz,\,1)_\lz$-atomic block $b$,
\begin{equation}\label{e4.6}
\|\cm(b)\|_{L^1(\mu)}\ls|b|_{H^{1,\,\fz}_{\rm atb}(\mu)}.
\end{equation}
By the argument used in the estimate for \eqref{e2.7}, we see that
\eqref{e4.6} holds true if we replace any $(p_0,\,1)_\lz$-atomic block and
\eqref{e2.9} by an $(\fz,\,1)_\lz$-atomic block and \eqref{e4.3},
respectively. We omit the details, which completes the proof of
Theorem \ref{t1.2}.
\end{proof}

\Acknowledgements{The first author is supported by the Mathematical Tianyuan Youth
Fund of the National Natural Science Foundation of
China (Grant No. 11026120) and Chinese Universities Scientific Fund (Grant No.
2011JS043). The second author is supported by the National Natural
Science Foundation of China (Grant No. 11171027) and
the Specialized Research Fund for the Doctoral Program of Higher Education
of China (Grant No. 20120003110003). The authors would like to thank the referees for their careful reading
and many valuable remarks which made this article more readable.}

%    Insert the bibliography data here.

\end{document}